\theoremstyle{definition}
\newtheorem{theorem}{Theorem}[section]
\newtheorem{corollary}[theorem]{Corollary}
\newtheorem{conjecture}[theorem]{Conjecture}
\newtheorem{lemma}[theorem]{Lemma}
\newtheorem{remark}[theorem]{Remark}
\newcommand{\h}{\mathcal{H}}
\newcommand{\F}{\mathbb{F}}
\newcommand{\cF}{\mathcal{F}}
\newcommand{\PSL}{\mathrm{PSL}}
\newcommand{\PGL}{\mathrm{PGL}}
\newcommand{\PG}{\mathrm{PG}}
\newcommand{\GL}{\mathrm{GL}}
\newcommand{\SL}{\mathrm{SL}}
\title{An Infinite Family of Connected 1-Factorisations of Complete 3-Uniform Hypergraphs}
\author{Barbara Maenhaut\thanks{bmm@maths.uq.edu.au} \, Jeremy Mitchell\thanks{jeremy.mitchell@uq.edu.au}\, and Anna Pusk\'as \thanks{anna.puskas@glasgow.ac.uk}}
\affil{{\small School of Mathematics and Physics\\ The University of Queensland\\ QLD 4072, Australia} }
\affil{{\small School of Mathematics \& Statistics \\ University of Glasgow \\ Glasgow, Scotland} }
\date{\today}
\begin{document}
\maketitle

\begin{abstract}
    A connected 1-factorisation is a 1-factorisation of a hypergraph for which the union of each pair of distinct 1-factors is a connected hypergraph. A uniform 1-factorisation is a 1-factorisation of a hypergraph for which the union of each pair of distinct 1-factors is isomorphic to the same subhypergraph, and a uniform-connected 1-factorisation is a uniform 1-factorisation in which that subhypergraph is connected. Chen and Lu [Journal of Algebraic Combinatorics, 46(2) 475--497, 2017] describe a family of 1-factorisations of the complete 3-uniform hypergraph on $q+1$ vertices, where $q\equiv 2\pmod 3$ is a prime power. In this paper, we show that their construction yields a connected 1-factorisation only when $q=2,5,11$ or $q=2^p$ for some odd prime $p$, and a uniform 1-factorisation only for $q=2,5,8$ (each of these is a uniform-connected 1-factorisation).

\end{abstract}

\section{Introduction}
A \emph{1-factor} of a graph $G$ is a spanning 1-regular subgraph of $G$, and a \emph{1-factorisation} of $G$ is a collection of edge-disjoint 1-factors of $G$ that partition the edge-set of $G$. It is natural to ask: under what conditions does the complete graph on $n$ vertices ($K_n$) admit a 1-factorisation? It is clear that $n$ must be even. By Kirkman's 1847 construction of 1-factorisations of $K_n$ for all even integers $n\geq 2$ \cite{kirkman1847problem}, this condition is sufficient.

Given a 1-factorisation of a graph $G$, a well-studied problem is to ask if the union of each pair of its 1-factors is isomorphic to the same subgraph $H$ of $G$.  Such a 1-factorisation is called a \emph{uniform 1-factorisation} (U1F) of $G$ and the subgraph $H$ is called the \emph{common graph}. Furthermore, a uniform 1-factorisation in which the common graph is a Hamilton cycle is called a \emph{perfect 1-factorisation} (P1F). In the 1960's, Kotzig \cite{kotzig1964theory} posed a question which has become known as Kotzig's perfect 1-factorisation conjecture, namely that for each even integer $n$, the complete graph $K_{n}$ admits a perfect 1-factorisation. Three infinite families of perfect 1-factorisations of complete graphs are known to exist, covering orders   $n=p+1$ and $n=2p$ where $p$ is an odd prime \cite{bryantp1f}. Perfect 1-factorisations of complete graphs are also known to exist for all even orders up to $n=56$ and some other sporadic orders, however the conjecture remains open. For an updated overview of the problem, we recommend a survey by Rosa \cite{alexander2019perfect} and a paper on the number of non-isomorphic P1Fs of $K_{16}$ by Gill and Wanless \cite{gill2020perfect}. Recently, Davies, Maenhaut, and Mitchell \cite{mitchell2022factorisation}, have generalised the notions of uniform and perfect 1-factorisations of graphs to the context of hypergraphs.

A {\em hypergraph} $\h$ consists of a non-empty vertex set $V(\h)$ and an edge set $E(\h)$ in which each element of $E(\h)$ is a non-empty subset of the vertex set $V(\h)$. The complete $k$-uniform hypergraph of order $n$, denoted $K_n^k$, is the hypergraph with $n$ vertices, in which the edges are precisely all the $k$-subsets of the vertex set. A spanning 1-regular subhypergraph of a hypergraph is known as a \textit{1-factor}. A partition of the edge set of a hypergraph $\h$ into 1-factors is called a \emph{1-factorisation} of $\h$, and such a 1-factorisation having $\alpha$ 1-factors is often denoted by $\mathcal{F}=\{F_1, \dots, F_\alpha\}$. An obvious necessary condition for the existence of a 1-factorisation of the complete $k$-uniform hypergraph on $n$ vertices is that $k|n$. Baranyai \cite{baranyai1974factorization} showed that for $k\geq 3$, this condition is also sufficient.

A \emph{path} between two vertices, $x$ and $y$, of a hypergraph $\h$ is an alternating sequence of vertices and edges
$$
    [x=v_1,e_1,v_2,e_2,\dots,v_s,e_s,v_{s+1}=y],
$$
such that $v_1,v_2,\dots,v_{s+1}$ are distinct vertices of $\h$, and $e_1,e_2,\dots e_s$ are distinct edges of $\h$ such that $\{v_i,v_{i+1}\}\subseteq e_i$ for $1\leq i \leq s$. If every two vertices of a hypergraph $\h$ have a path between them, we say that $\h$ is \emph{connected}. A \textit{Berge cycle} in a hypergraph $\h$, is an alternating sequence of vertices and edges
$$(v_1, e_1, v_2, e_2, \dots, v_m, e_m),$$
such that $[v_1, e_1, v_2, e_2, \dots, v_m]$ is a path in $\h$, $\{v_1,v_m\}\subseteq e_m,$ and $e_m\in E(\h)\setminus\{e_1, e_2, \dots, e_{m-1}\}$. Note that each edge $e_i$ may contain vertices other than $v_i$ and $v_{i+1}$ including vertices outside of $\{v_1,\dots,v_m\}$. A \textit{Hamilton Berge cycle} in a hypergraph $\h$ is a Berge cycle in $\h$ for which $\{v_1,\dots,v_m\}$ is the vertex set of $\h$.

In \cite{mitchell2022factorisation}, Davies et al.\ generalised uniform and perfect 1-factorisations of graphs to the context of hypergraphs in several different ways, leading to the following definitions. A \emph{connected 1-factorisation} (C1F) is a 1-factorisation of a hypergraph for which the union of each pair of distinct 1-factors is a connected hypergraph. A \emph{uniform 1-factorisation} (U1F) is a 1-factorisation of a hypergraph for which the union of each pair of 1-factors is isomorphic to the same subhypergraph, called the common hypergraph, and a \emph{uniform-connected 1-factorisation} (UC1F) is a U1F in which the common hypergraph is connected. A \emph{Hamilton-Berge 1-factorisation} (HB1F) is a 1-factorisation of a $k$-uniform hypergraph for which the union of each $k$-set of 1-factors has a Hamilton Berge cycle. In addition to showing some existence results of these generalisations, they also classified some known 1-factorisations as being C1Fs, HB1Fs, and U1Fs \cite{mitchell2022factorisation}. Of these known 1-factorisations, the infinite family of symmetric 1-factorisations of $K_{q+1}^3$ for $q\equiv 2\pmod 3$ found by Chen and Lu \cite{chen2017symmetric} is the subject of this paper. We denote this 1-factorisation by $\mathcal{F}_q,$ see \S \ref{ss:construction_explicit}. Davies et al.\ \cite{mitchell2022factorisation}, showed that for $q=2,5,8$ these 1-factorisations are C1Fs, U1Fs, UC1Fs, and HB1Fs, and that the 1-factorisations when $q=11,32$ are C1Fs and HB1Fs.

The main results of this paper determine when $\mathcal{F}_q$ is a C1F or a U1F.
\begin{theorem}\label{firstmain}
    $\mathcal{F}_q$ is a connected 1-factorisation if and only if $q\in \{2,5,11\}$ or $q=2^p$ for some odd prime $p$.
\end{theorem}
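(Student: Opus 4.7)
The plan is to exploit the symmetric structure of the Chen--Lu construction. In it, the vertex set of $K_{q+1}^3$ is identified with $\PG(1,q)=\F_q\cup\{\infty\}$, and a subgroup $G\le \PGL(2,q)$ acts on $\mathcal{F}_q$ by permuting the 1-factors with at most a few orbits. Fix a base 1-factor $F_0$. Then the connectedness of $F_i\cup F_j$ depends only on the $(G\times G)$-orbit of $(F_i,F_j)$, equivalently on a $(\mathrm{Stab}_G(F_0),\mathrm{Stab}_G(F_0))$-double coset in $G$, so it suffices to decide for each double-coset representative $g$ whether $F_0\cup (g\cdot F_0)$ is connected (together with the analogous check for any cross-orbit pairs).

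For each such pair, $F_0\cup (g\cdot F_0)$ is a $2$-regular $3$-uniform hypergraph on $q+1$ vertices, and its connected components are the orbits on $\PG(1,q)$ of the equivalence relation generated by co-membership in a triple of $F_0$ or of $g\cdot F_0$. The core technical step is to show that every such orbit is either all of $\PG(1,q)$ or an algebraically distinguished proper subset; the two natural candidates are a rational subline $\PG(1,q')$ for a proper divisor $q'\mid q$, or a coset of a subgroup of $\F_q^\ast$ respected by the Chen--Lu block structure (which is governed by the cube classes of $\F_q^\ast$, whence the hypothesis $q\equiv 2\pmod 3$).

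Granted this characterisation, the \emph{if} direction is handled case by case. The case $q=2$ is vacuous; for $q\in\{5,11\}$ the prime fields have no proper subfields, so one only needs a finite check on the small number of double-coset representatives to rule out the multiplicative-coset obstruction; and for $q=2^p$ with $p$ an odd prime, the only proper subfield is $\F_2$, whose subline has $3$ points, so any proper $3$-point component would have to be a common triple of $F_0$ and $g\cdot F_0$, forcing $g\in \mathrm{Stab}_G(F_0)$ contrary to assumption. The \emph{only if} direction requires producing, for each $q$ outside the list, an explicit $g$ for which $F_0\cup (g\cdot F_0)$ is disconnected: when $q=p^n$ admits a proper subfield $\F_{p^d}$ with $p^d+1\ge 3$ surviving in the subline analysis, an element of $G$ setwise stabilising $\PG(1,p^d)$ does the job; for the remaining odd primes $q\notin\{5,11\}$ one must engineer a disconnecting multiplicative coset by direct computation. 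The principal obstacle is precisely this last case, where no subfield is available and the disconnecting invariant subset must be located by hand through a careful analysis of the algebraic form of the Chen--Lu 1-factors.
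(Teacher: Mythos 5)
Your reduction to double cosets and your identification of the connected components of $F_0\cup(g\cdot F_0)$ with the orbits of the group generated by the two order-$3$ maps are sound, and match the paper's starting point (Remark 2.2 and Lemma 3.1). But the ``core technical step'' --- that every proper invariant subset is either a subline $\PG(1,q')$ or a multiplicative coset governed by cube classes --- is both unproved and false, and everything downstream depends on it. (Note in passing that for $q\equiv 2\pmod 3$ every element of $\F_q^{\ast}$ is a cube, so there are no nontrivial cube classes; the congruence condition is what makes the order-$3$ elements of $\PSL(2,q)$ fixed-point-free on $\PG(1,q)$.) The obstruction you are missing is subgroup-theoretic: by Dickson's classification of subgroups of $\PSL(2,q)$, a subgroup containing at least four elements of order $3$ is, when $q\equiv 2\pmod 3$, either a subfield subgroup, one of $A_4$, $S_4$, $A_5$, or all of $\PSL(2,q)$. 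For an odd prime $q>11$ the paper shows by counting --- $\PSL(2,q)$ has $\frac{q(q^2-1)}{24}$ copies of $A_4$, each $C_3$ lies in $\frac{q+1}{3}$ of them, and the 1-factors give $\frac{q(q-1)}{2}$ distinct $C_3$'s --- that some two distinct 1-factors generate a group inside a common $A_4$, which has only $12$ elements and therefore cannot act transitively on $q+1>12$ points. The resulting disconnected components are orbits of an $A_4$, not multiplicative cosets, so the ``direct computation'' you defer to in the odd-prime case would be searching for the wrong kind of invariant set.

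Symmetrically, your ``if'' direction for $q=2^p$ is incomplete for the same reason: ruling out the $3$-point subline $\PG(1,2)$ does not rule out the other possible proper components. The paper closes this case with Serre's criterion: $\PSL(2,2^p)$ contains a copy of $A_4$ (hence of $S_4$ or $A_5$) only if $x^2+x=1$ has a solution in $\F_{2^p}$, which is excluded because $f=m_{1,0}$ is fixed-point-free, i.e.\ $x^2+x+1$ has no root; combined with the classification this forces $\langle f,m_{\alpha,\beta}\rangle=\PSL(2,2^p)$, which is transitive. So the two genuinely hard cases of the theorem --- non-connectedness for odd primes $q>11$ and connectedness for $q=2^p$ --- are precisely the ones your outline leaves unresolved, and the subline/coset dichotomy you propose to resolve them with does not hold.
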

\begin{theorem}\label{secondmain}
    $\mathcal{F}_q$ is a uniform 1-factorisation if and only if $q\in \{2,5,8\}$, and in these cases it is a uniform-connected 1-factorisation.
\end{theorem}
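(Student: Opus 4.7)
The plan is to prove the two directions separately, treating the forward direction (for $q \in \{2, 5, 8\}$) as already known and concentrating the work on the converse.

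For the forward direction, when $q \in \{2, 5, 8\}$, the fact that $\mathcal{F}_q$ is a UC1F (and hence a U1F) has already been established by Davies, Maenhaut and Mitchell \cite{mitchell2022factorisation}, so I would simply cite their result.

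For the converse, the task is to show that for every prime power $q \equiv 2 \pmod 3$ with $q \notin \{2, 5, 8\}$, the factorisation $\mathcal{F}_q$ is not uniform. My strategy is to exhibit two pairs of 1-factors $\{F_i, F_j\}$ and $\{F_k, F_\ell\}$ in $\mathcal{F}_q$ whose unions are non-isomorphic as 3-uniform hypergraphs. The natural first step is to exploit the symmetry group of $\mathcal{F}_q$, which acts on the set of 1-factors through a subgroup of $\PGL_2(q)$ (or $\PSL_2(q)$), to cut the problem down to a small number of orbit representatives on unordered pairs of distinct 1-factors. For each representative I would compute an isomorphism invariant of the union hypergraph, escalating from coarse to fine invariants until a distinguishing one emerges.

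I would split the remaining argument according to Theorem \ref{firstmain}. If $\mathcal{F}_q$ is not a C1F, at least one pair of 1-factors has disconnected union, and it suffices to exhibit another pair whose union has a different number of connected components (or is connected outright); the component count is the distinguishing invariant. This handles all $q$ outside $\{2, 5, 11\} \cup \{2^p : p \text{ odd prime}\}$. For the remaining values $q \in \{11\} \cup \{2^p : p \text{ odd prime}\}$, every pair-union is connected by Theorem \ref{firstmain}, so a finer invariant is required: I would count, for example, the number of pairs of triples sharing exactly two vertices, or equivalently the multiplicities in the underlying pair-multigraph, and show that this count differs between two carefully chosen orbit representatives.

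The main obstacle is expected to be the case $q = 2^p$ with $p$ an odd prime: the high degree of symmetry in characteristic 2 makes coarse invariants of the pair-unions indistinguishable, and so a delicate computation is needed to extract a finer one. I anticipate that this step will require an explicit combinatorial count tied to the cube classes in $\F_q^\times$, running in parallel with the orbit-by-orbit analysis used for Theorem \ref{firstmain} but extracting information more refined than mere connectivity.
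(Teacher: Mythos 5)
Your proposal is a plan rather than a proof: the forward direction is handled correctly by citation, but for the converse every substantive step is deferred (``escalating from coarse to fine invariants until a distinguishing one emerges'', ``a delicate computation is needed''), and the one concrete structural idea you are missing is precisely what makes the paper's argument tractable. The paper invokes a result of Davies et al.\ \cite{mitchell2022factorisation}: any U1F of $K_n^3$ has \emph{pair overlap number} exactly $2$, i.e.\ the union of any two 1-factors contains exactly two vertex-pairs covered by edges of both factors. This converts the problem from your formulation (exhibit \emph{two} orbit representatives of pairs of 1-factors with non-isomorphic unions, which requires understanding the orbit structure and comparing invariants) into exhibiting a \emph{single} pair $F_{1,0},F_{\alpha,\beta}$ whose overlap number is not $2$ --- an absolute criterion needing no comparison and no orbit analysis. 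Your proposed fine invariant (multiplicities in the underlying pair-multigraph) is essentially this overlap count, so you are circling the right quantity, but without the ``must equal $2$'' lemma you are left with the much harder comparative problem. Your coarse invariant for the non-C1F cases also has an unproved hypothesis: you need some pair-union to be connected (or to have a different component count from the disconnected one you know exists), and nothing in your outline establishes this.

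The computations you defer are the actual content. The paper reduces the overlap count to counting roots of the quadratics arising from $f(x)=m_{\alpha,\beta}(x)$ and $f^{-1}(x)=m_{\alpha,\beta}(x)$. For $q=p^{\ell}$ with $p>5$ the single pair $F_{1,0},F_{-1,0}$ gives overlap $1$ or $3$ according to whether $5$ is a square, settling those cases (including $q=11$, which your plan routes through the hard comparative analysis). For $q=5^{\ell}$ a discriminant argument using that the product of two non-squares is a square produces an $\alpha$ with overlap $4$. For $q=2^{\ell}$, $\ell>3$ odd --- the case you correctly flag as hardest --- the root count is governed by trace conditions $\mathrm{Tr}\bigl(\alpha/(\alpha^2+\alpha+1)^2\bigr)=0$ via Artin--Schreier theory, and the existence of a suitable $\alpha$ is forced by showing that the contrary assumption makes every $x\in\mathbb{F}\setminus\{0,1\}$ satisfy $\mathrm{Tr}(x+1/x)=1$, a polynomial identity of degree $2^{\ell-1}+2^{\ell-2}<2^{\ell}-2$, a contradiction. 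Your guess that ``cube classes in $\mathbb{F}_q^{\times}$'' drive this case points in the wrong direction; the mechanism is additive (traces and Hilbert 90), not multiplicative. As it stands the proposal identifies a workable target but leaves the decisive lemma uninvoked and all case-by-case field arithmetic undone.
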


\section{Preliminaries}\label{s:prelim}

Throughout this paper, $q$ denotes a prime power that satisfies $q\equiv 2 \pmod 3.$ Let $\mathbb{F}=\mathbb{F}_q$ be the finite field of order $q$, let $\mathbb{F}^*=\mathbb{F}\setminus \{0\}$, and let $V=\mathbb{F}\cup \{\infty\}$ be the vertex set of the complete 3-uniform hypergraph of order $q+1$, $K^{3}_{q+1}$. We will work with a particular 1-factorisation of $K^{3}_{q+1}$, originally given by Chen and Lu \cite{chen2017symmetric}. To describe this 1-factorisation, we first recall the action of the group $\PSL(2,q)$ on the projective line $\PG(1,q).$ 

\subsection{The action of $\mathbf{PSL(2,q)}$ on the projective line}

Recall that $V=\mathbb{F}\cup \{\infty\}$ can be identified with the projective line $\PG(1,q).$ (In homogeneous coordinates, identify $\infty$ with $[1:0]^T$ and $x\in \F$ with $[x:1]^T.$) The group $\PSL(2,q)$ is a quotient of $\SL(2,q):$
$$\PSL(2,q)=\left\lbrace \left(\begin{array}{cc}
            \alpha & \beta  \\
            \gamma & \delta
        \end{array}\right)\mid \alpha,\beta,\gamma,\delta \in \F;\ \alpha\delta-\beta\gamma =1 \right\rbrace\slash \sim ,$$
where two matrices are equivalent by $\sim $ if one is a scalar multiple of the other. The group $\PSL(2,q)$ acts faithfully on $\PG(1,q):$
$$\left(\begin{array}{cc}
            \alpha & \beta  \\
            \gamma & \delta
        \end{array}\right).\left[ \begin{array}{c}
            x \\
            y\end{array}\right]=
    \left[ \begin{array}{c}
            \alpha x+\beta y \\
            \gamma x+\delta y
        \end{array}\right].$$

It is often more convenient to think of this action in terms of fractional linear transformations on $\F\cup \{\infty \}.$ We may also write this as:
$$t_{\alpha,\beta,\gamma,\delta }(x):=\frac{\alpha x+\beta }{\gamma x+\delta }$$
for $x\in \F\cup \{\infty \},$ where $\frac{\alpha \infty+\beta}{\gamma \infty+\delta }=\frac{\alpha}{\gamma}$ for $\gamma \neq 0$, $\frac{\alpha \infty + \beta}{\delta} = \infty$ for $\alpha \neq 0$, and $\frac{\omega}{0}=\infty$ for $\omega \in \mathbb{F}^*$.

Recall that the action of $\PSL(2,q)$ on $\PG(1,q)$ is transitive. Note that we may think of $\PSL(2,q)$ as a subgroup of $\PGL(2,q)=\GL(2,q)/\sim ,$ and the action given above extends to one of $\PGL(2,q).$ Under this (extended) action, the stabiliser of the point $\infty$ is a subgroup. Indeed, it is the group of the invertible linear transformations of $\F\cup \{\infty\}:$
$$\PGL(2,q)_{\infty}=\{g_{\alpha,\beta}:=t_{\alpha,\beta,0,1}\mid \alpha \in \F^{\ast },\ \beta \in \F\}.$$

\subsection{The construction of Chen and Lu}\label{ss:construction_explicit}
The following construction was given by Chen and Lu \cite[Example 5.1, Lemma 5.2]{chen2017symmetric}. We present it here in a format convenient for our purposes. Let $f:V\rightarrow V$ be the map defined by
$$
    f(x)=\frac{1}{1-x} \text{  for $x\in V\setminus \{ 1,\infty \} $ and $f(1)=\infty$ and $f(\infty)=0$.}
$$
For any $\alpha \in \mathbb{F}^{\ast }$ and $\beta \in \mathbb{F}$ define $g_{\alpha,\beta}:$ $V\rightarrow V$ as $g_{\alpha,\beta}(x)=\alpha x + \beta$ for $x\in \F$ and $g_{\alpha,\beta}(\infty)=\infty$. Set $m_{\alpha,\beta}:=g_{\alpha,\beta} \circ f \circ g_{\alpha,\beta}^{-1}.$ Here $g_{\alpha,\beta}\in \PGL(2,q)$ and the conjugation makes sense in this group, so that $m_{\alpha,\beta}\in \PSL(2,q).$ Then
$$m_{\alpha,\beta}(x) = \beta + \frac{\alpha^2}{\alpha + \beta - x} \text{  for $x\in \F \setminus \{\alpha+\beta\}$ and $m_{\alpha,\beta}(\alpha+\beta)=\infty$, $m_{\alpha,\beta}(\infty)=\beta$.}$$
Note that $m_{1,0}=f$ and $m_{\alpha,\beta }^2=m_{\alpha,\beta }^{-1}.$ In particular, $m_{\alpha,\beta }\in \PSL(2,q)$ has order $3$ for any $\alpha \in \mathbb{F}^{\ast }$ and $\beta \in \mathbb{F}.$ It has no fixed points on $V=\PG(1,q).$ We also note that in the notation of \cite{chen2017symmetric}, $f=t_{0,1,-1,1}$ and $m_{\alpha,\beta }=t_{-\beta ,\alpha^2+\alpha\beta+\beta^2,-1,\alpha+\beta }$.

It follows that the sets
$$F_{\alpha,\beta}:=
    \left\lbrace \left\lbrace x,m_{\alpha,\beta}(x),m_{\alpha,\beta}^{-1}(x)\right\rbrace  \mid \, x\in \mathbb{F}\cup \{\infty\} \right\rbrace$$
are 1-factors of $K^3_{q+1}$ on $V.$ Set
$$\mathcal{F}_q=\{ F_{\alpha,\beta} \mid \alpha \in \mathbb{F}^{\ast }, \beta \in \mathbb{F}\}.$$
By \cite[Lemma 5.2]{chen2017symmetric} $\mathcal{F}_q$ is a 1-factorisation of $K_{q+1}^3$ for $q\geq 5$ if $q \equiv 2 \pmod 3$. In fact for $q=2$ it is the trivial 1-factorisation of $K_3^3$.  Recall also that $\mathcal{F}_q$ has $\frac{q(q-1)}{2}$ 1-factors. In particular, each is represented by exactly two $(\alpha,\beta)$ pairs: $F_{\alpha,\beta} = F_{-\alpha,\alpha+\beta}$. Using the notation of \cite{chen2017symmetric}, $F_{\eta^i,\beta }$ is $F_{i,\beta }$ in our notation, and their ${\mathcal{PG}}_{(q+1;3,\frac{q(q-1)}{2})}$ is our $\mathcal{F}_q.$

\begin{remark}\label{rmk:quadwithnosol}
    Observe that $f=m_{1,0}$ having no fixed points implies that no $x\in \mathbb{F}$ satisfies $x^2-x+1=0.$
\end{remark}

In the language of the action of $\PSL(2,q)$ on $\PG(1,q),$ each 1-factor $F_{\alpha,\beta }\in {\mathcal{F}}_q$ is the set of orbits of $\langle m_{\alpha,\beta }\rangle $ on $V=\PG(1,q).$ The elements $m_{\alpha,\beta }\in \PSL(2,q)$ are exactly the conjugates of $f=m_{1,0}$ in $\PGL(2,q)$ by the stabiliser $\PGL(2,q)_{\infty}.$ Each subgroup $\langle m_{\alpha,\beta }\rangle $ is isomorphic to the cyclic group of order $3$.

In this paper we are concerned with the subhypergraph spanned by two (or in the last section, three) 1-factors of ${\mathcal{F}}_q.$ The following remark says that by relabelling vertices of $V$ we may assume, without loss of generality, that one of the factors is $F_{1,0}.$

\begin{remark}\label{rmk:relabeling}
    Given two 1-factors $F_{\alpha_1,\beta _1},F_{\alpha_2,\beta _2}$ of ${\mathcal{F}}_q,$ there is a $1$-factor $F_{\alpha_0,\beta _0}$ so that the subhypergraph $H$ spanned by the pair $(F_{\alpha_1,\beta _1},F_{\alpha_2,\beta _2})$ is isomorphic to the one, $H'$, spanned by the pair $(F_{1,0},F_{\alpha_0,\beta _0}).$ Indeed, let $(m_{\alpha_1,\beta _1},m_{\alpha_2,\beta _2})$ be a pair of elements of $\PSL(2,q).$ Consider $g=g_{\alpha_1^{-1},-\alpha_1^{-1}\beta_1},$ and set $\alpha_0=\alpha_1^{-1}\alpha_2,$ $\beta_0=\alpha_1^{-1}(\beta_2-\beta_1).$ Then $(gm_{\alpha_1,\beta _1}g^{-1},gm_{\alpha_2,\beta _2}g^{-1})=(f,m_{\alpha_0,\beta_0})=(m_{1,0},m_{\alpha_0,\beta_0}).$ Therefore $g$ gives an isomorphism between the hypergraphs $H$ and $H'.$
\end{remark}

\section{Connected 1-Factorisations of $\mathcal{F}_q$}\label{s:C1Fs}
In this section we prove Theorem 1.1, noting that $\mathcal{F}_2,\mathcal{F}_5,\mathcal{F}_8$, and $\mathcal{F}_{11}$ were shown to be C1Fs in \cite{mitchell2022factorisation}. The graph theoretic properties of the factorisation $\mathcal{F}_q$ can be rephrased in terms of the action of $\PSL(2,q)$ on $\PG(1,q).$ We explain this first. Note that whenever we refer to the action of a group on $\PG(1,q),$ we understand this to mean that the group is embedded into $\PSL(2,q)$ or $\PGL(2,q),$ and the action is the one described in \S\ref{s:prelim}.

\begin{lemma}\label{lem:C1F_iff_gensgstransitive}
    The 1-factorisation $\mathcal{F}_q$ is a connected 1-factorisation if and only if the subgroup $H_{\alpha, \beta }=\langle f,m_{\alpha,\beta}\rangle $  acts transitively on $\PG(1,q)$ for each $(\alpha,\beta)\in \mathbb{F}_q^*\times \mathbb{F}_q.$ Such a subgroup $H_{\alpha, \beta }$ has at least four elements of order $3.$
\end{lemma}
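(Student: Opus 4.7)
My plan is to translate the graph-theoretic property of $F_{1,0}\cup F_{\alpha,\beta}$ being connected into a statement about transitivity of a group action, exploiting the fact (emphasised in the preamble) that the edges of $F_{\alpha,\beta}$ are precisely the three-element orbits of $\langle m_{\alpha,\beta}\rangle$ on $V=\PG(1,q).$

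I would first use Remark \ref{rmk:relabeling} to reduce to considering unions of the form $F_{1,0}\cup F_{\alpha,\beta},$ since any pair of distinct 1-factors is isomorphic (via an element of $\PGL(2,q)$) to such a pair, and connectedness is an isomorphism invariant of hypergraphs. The heart of the matter is then the equivalence: two vertices $x,y\in V$ lie in the same connected component of $F_{1,0}\cup F_{\alpha,\beta}$ if and only if $y\in H_{\alpha,\beta}\cdot x.$ For the forward direction, along any path in $F_{1,0}\cup F_{\alpha,\beta}$ consecutive vertices sit in a common $\langle f\rangle$- or $\langle m_{\alpha,\beta}\rangle$-orbit and are therefore related by a power of $f$ or $m_{\alpha,\beta};$ reading the path off gives $y=h(x)$ for some $h\in H_{\alpha,\beta}.$ For the converse, since $f$ and $m_{\alpha,\beta}$ have no fixed points on $V,$ the vertex $x$ shares an edge with each of $f^{\pm 1}(x)$ and $m_{\alpha,\beta}^{\pm 1}(x);$ as belonging to a common connected component is an equivalence relation, iterating shows that the entire orbit $H_{\alpha,\beta}\cdot x$ lies in a single component. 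This equivalence directly yields the first part of the lemma.

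For the second assertion, I would observe that transitivity of $H_{\alpha,\beta}$ on $\PG(1,q)$ forces $|H_{\alpha,\beta}|\geq q+1,$ which exceeds $3$ when $q\geq 5.$ Hence $H_{\alpha,\beta}$ strictly contains $\langle f \rangle,$ so $m_{\alpha,\beta}\notin \{1,f,f^{-1}\}.$ Since $f$ and $m_{\alpha,\beta}$ both have order $3,$ the elements $f,\ f^{-1},\ m_{\alpha,\beta},\ m_{\alpha,\beta}^{-1}$ are then four distinct order-$3$ elements of $H_{\alpha,\beta}.$ I do not expect any serious obstacle in this lemma; it is a dictionary statement that recasts the C1F question as a transitivity question in $\PSL(2,q),$ preparing the ground for the subgroup-structure arguments of the rest of Section \ref{s:C1Fs}.
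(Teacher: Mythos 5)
Your proposal is correct and follows essentially the same route as the paper: reduce to pairs $(F_{1,0},F_{\alpha,\beta})$ via Remark \ref{rmk:relabeling} and identify the connected components of $F_{1,0}\cup F_{\alpha,\beta}$ with the orbits of $H_{\alpha,\beta}=\langle f,m_{\alpha,\beta}\rangle$ on $\PG(1,q)$, reading a path as a word in $f^{\pm 1}$ and $m_{\alpha,\beta}^{\pm 1}$. The only divergence is in the final assertion: you deduce the four distinct order-$3$ elements from transitivity (via $|H_{\alpha,\beta}|\geq q+1>3$), whereas the paper deduces it directly from $F_{\alpha,\beta}\neq F_{1,0}$ (equivalently $m_{\alpha,\beta}\notin\{1,f,f^{-1}\}$), which is the weaker hypothesis under which Corollary \ref{cor:whatsubgroups} later invokes this fact; your own chain already yields the conclusion from $H_{\alpha,\beta}\neq\langle f\rangle$ alone, so it is worth stating it under that hypothesis rather than under transitivity.
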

\begin{proof}
    As seen above, two vertices of $V$ are in the same edge of $F_{\alpha,\beta}$ if and only if they are in the same orbit under the action of $\langle m_{\alpha,\beta }\rangle$ on $\PG(1,q).$ Similarly, the union of two 1-factors $F_{1,0}\cup F_{\alpha,\beta}$ is connected if and only if the action of the subgroup $H:=\langle f,m_{\alpha,\beta}\rangle $ acts transitively on $\PG(1,q).$ Indeed, for $x,y\in V$ a path from $x$ to $y$ along edges from $F_{1,0}$ and $F_{\alpha,\beta}$ corresponds to an element $w\in \PSL(2,q)$ such that $w(x)=y$ and $w=f^{i_0} \circ m_{\alpha,\beta}^{i_1}\circ f^{i_2}\circ \cdots \circ m_{\alpha,\beta}^{i_\ell}\in H$, $i_k\in\{-1,0, 1\}$ ($0\leq k\leq \ell$). The first statement now follows from Remark \ref{rmk:relabeling}.

    Furthermore if $F_{\alpha,\beta}\neq F_{1,0}$ then $f,f^{-1},m_{\alpha,\beta },m_{\alpha,\beta }^{-1}$ are four distinct elements of $H_{\alpha, \beta }$ that each have order 3.
\end{proof}

According to Lemma \ref{lem:C1F_iff_gensgstransitive}, the subgroups of $\PSL(2,q)$ are relevant for deciding whether $\cF_q$ is a C1F or not. We recall the following theorem classifying all subgroups of ${{PSL}}(2,q)$.

\begin{theorem} \cite[Theorem 6.25]{Suzuki1982GroupTheory} \label{subgroups}
    Let $q=p^s,$ $d=\gcd(2,p-1).$ Every subgroup of $PSL(2,q)$ is isomorphic to (at least) one of the following.
    \begin{enumerate}[(a)]
        \item The dihedral groups of orders $\frac{2(q\pm 1)}{d}$ and their subgroups.
        \item A group $K$ of order $\frac{q(q-1)}{d}$ and its subgroups. A Sylow $p$-subgroup $Q$ of $K$ is elementary Abelian, normal in $K,$ and the factor group $K/Q$ is a cyclic group of order $\frac{q-1}{d}.$
        \item $A_4,S_4,$ or $A_5.$
        \item ${{PSL}}(2,p^r)$ or ${{PGL}}(2,p^r)$ where $r|s.$ Note ${{PSL}}(2,2^r)={{PGL}}(2,2^r)$. Further, for $p>2$, ${{PGL}}(2,p^r)$ does not occur if $r=s$.
    \end{enumerate}
\end{theorem}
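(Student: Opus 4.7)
My plan is to follow Dickson's classical approach (1901) to this classification, as organised in Suzuki's book. Fix $H\leq \PSL(2,q)$ with $q=p^s$ and $d=\gcd(2,p-1)$, and analyse $H$ through the structure of a Sylow $p$-subgroup $P\leq H$. Before branching into cases, I would first record the geometric ``dictionary'' inside $\PSL(2,q)$: non-identity elements are either \emph{unipotent} (order $p$, fixing exactly one point of $\PG(1,q)$), \emph{split semisimple} (fixing two $\F_q$-points), or \emph{non-split semisimple} (fixing two $\F_{q^2}$-points swapped by the Frobenius). The three natural ``host'' subgroups are the Borel subgroup $B$ of order $q(q-1)/d$ (a point stabiliser), the split torus normaliser (dihedral of order $2(q-1)/d$), and the non-split torus normaliser (dihedral of order $2(q+1)/d$); these produce the subgroups listed in (a) and (b).

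I would then split into three cases on $P$. \emph{Case $P=1$:} every element of $H$ is semisimple, so each cyclic subgroup of $H$ lies inside some torus and hence in one of the dihedral groups from (a). A Riemann--Hurwitz-style count of the orbits of $H$ on $\PG(1,\overline{\F}_p)$ with non-trivial stabilisers (equivalently, the classification of finite subgroups of $\PGL(2,\overline{\F}_p)$ of order coprime to $p$) forces $H$ to be cyclic, dihedral, $A_4$, $S_4$, or $A_5$, yielding (a) and (c). \emph{Case $P\neq 1$ and $P\trianglelefteq H$:} the group $P$ has a unique common fixed point $v\in \PG(1,q)$, and since $H$ normalises $P$ it preserves $\mathrm{Fix}(P)=\{v\}$; hence $H\leq B$. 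Inspecting subgroups of the solvable group $B\cong \F_q^{+}\rtimes(\F_q^{\ast}/\{\pm 1\})$ gives precisely the structure in (b), with Sylow $p$-subgroup elementary abelian and normal, and cyclic quotient. \emph{Case $P\neq 1$ and $P\not\trianglelefteq H$:} then $H$ has at least two Sylow $p$-subgroups $P,P'$ fixing distinct points of $\PG(1,q)$. The subgroup $T=\langle P,P'\rangle $ is generated by two unipotent subgroups in general position; a direct matrix computation identifies $T$ with $\PSL(2,\F_{p^r})$ for the subfield $\F_{p^r}\subseteq \F_q$ generated by the entries of the matrices in $P\cup P'$. One then checks that $H$ normalises $T$ and that the normaliser of $\PSL(2,p^r)$ inside $\PSL(2,q)$ is either $\PSL(2,p^r)$ itself or (in odd characteristic, and only when $r<s$) $\PGL(2,p^r)$, which is (d).

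The main obstacle is Case 3: one must show that two transverse unipotent subgroups generate exactly $\PSL(2,\F_{p^r})$ for a sub-field $\F_{p^r}$, which requires detailed commutator identities inside $\SL(2,q)$ together with a field-generation argument for the matrix entries (this is the technical core of Dickson's theorem). Case 1 is also delicate, since one must transport the characteristic-zero classification of finite subgroups of $\PGL(2,\mathbb{C})$ to the setting of $\PGL(2,\overline{\F}_p)$ under the constraint that the order is coprime to $p$; once that is known, the admissible orders inside $\PSL(2,q)$ are pinned down by Sylow theory applied to $|\PSL(2,q)|=\frac{q(q^2-1)}{d}$. Cases 2 and the ``assembly'' of the final list are then routine.
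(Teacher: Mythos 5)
The paper gives no proof of this statement---it is quoted directly from Suzuki \cite[Theorem 6.25]{Suzuki1982GroupTheory}---so there is no internal argument to compare against; I am assessing your outline on its own terms. Your overall strategy (trichotomy on a Sylow $p$-subgroup $P$ of $H$: trivial, nontrivial and normal, nontrivial and non-normal) is indeed the standard Dickson--Suzuki route, and Cases 1 and 2 are sound as sketches: in Case 2 the unique common fixed point of $P$ on $\PG(1,q)$ forces $H$ into a Borel subgroup, and in Case 1 the $p'$-subgroup classification does yield cyclic, dihedral, $A_4$, $S_4$, $A_5$.

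The genuine gap is in Case 3, where your argument would fail as written. First, $T=\langle P,P'\rangle$ for two transverse unipotent subgroups need \emph{not} be $\PSL(2,p^r)$ for a subfield: inside $\PSL(2,9)$ two suitable elements of order $3$ generate a copy of $A_5\cong\PSL(2,5)$, which is not of the form $\PSL(2,3^r)$ (equivalently, $\mathrm{SL}(2,5)$ embeds in $\mathrm{SL}(2,9)$ and is generated by two unipotents); the subfield-generation lemma you invoke has exceptions precisely in characteristics $2,3,5$. Second, there is no reason for $H$ to normalise the subgroup generated by two \emph{particular} Sylow $p$-subgroups; the normal subgroup one should consider is the one generated by \emph{all} $p$-elements of $H$, and even that can be all of $H$ without $H$ being of type (d) (again $H=A_5\le\PSL(2,9)$, which is simple). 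Consequently your trichotomy assigns the exceptional groups $A_4,S_4,A_5$ exclusively to the $p'$-case, which is false: they also arise with nontrivial non-normal Sylow $p$-subgroups exactly when $p\in\{2,3,5\}$, and any correct treatment of Case 3 must carry these as additional outcomes alongside $\PSL(2,p^r)$ and $\PGL(2,p^r)$. The standard proofs (Suzuki, Huppert) handle this by a finer analysis of $|P|$ and of $N_H(P)$ in the non-normal case rather than by the two-generator field argument alone.
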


We wish to understand which of the above subgroups are isomorphic to the subgroups obtained by taking the union of a pair of 1-factors of $\mathcal{F}_q$. We may eliminate some right away. This is the content of the following.

\begin{corollary}\label{cor:whatsubgroups}
    Let $q$ be an odd prime such that $q\equiv 2\pmod 3$. Let $F_{\alpha,\beta}\in {\mathcal{F}}_q$ be a 1-factor that is different from $F_{1,0},$ and let $H_{\alpha, \beta }=\langle f,m_{\alpha,\beta}\rangle .$ Then $H_{\alpha,\beta }$ is isomorphic to $A_4,$ $S_4$ or $A_5,$ or $H_{\alpha,\beta }=\PSL(2,q).$
\end{corollary}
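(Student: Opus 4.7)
The plan is to invoke Theorem \ref{subgroups}, Suzuki's classification of subgroups of $\PSL(2,q)$, and rule out all the options listed there except case (c) and the case $\PSL(2,q)$ itself within (d). Since $q$ is an odd prime, in the notation of that theorem we have $p = q$, $s = 1$, and $d = 2$, which substantially simplifies several of the cases.

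The engine of the argument is the last sentence of Lemma \ref{lem:C1F_iff_gensgstransitive}: $H_{\alpha,\beta}$ contains four distinct elements of order $3$, namely $f, f^{-1}, m_{\alpha,\beta}, m_{\alpha,\beta}^{-1}$. First I would use this to dispose of case (a): any subgroup of a dihedral group is cyclic or dihedral, and in either case every element of order greater than $2$ lies in a cyclic subgroup, which contains at most $\varphi(3) = 2$ elements of order $3$. Hence no subgroup of type (a) can accommodate four elements of order $3$.

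Case (b) is then killed by a divisibility argument. Since $q \equiv 2 \pmod 3$, neither $q$ nor $q - 1$ is divisible by $3$, so $|K| = q(q-1)/2$ is coprime to $3$, and by Lagrange no subgroup of $K$ contains any element of order $3$. For case (d), the constraint $r \mid s = 1$ forces $r = 1$, so the only candidate is $\PSL(2,p) = \PSL(2,q)$ itself; the $\PGL$ variant is excluded by Theorem \ref{subgroups} because $r = s$ and $p > 2$. What remains is exactly case (c), giving $A_4$, $S_4$, $A_5$, together with the full group $\PSL(2,q)$.

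I do not anticipate a serious obstacle: once the count of four order-$3$ elements is in hand, each case is a few lines. The only mild subtlety is in case (a), where one must recall explicitly that any subgroup of a dihedral group is itself cyclic or dihedral, which is standard but is needed to make the elementary bound on order-$3$ elements rigorous.
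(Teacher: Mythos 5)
Your proposal is correct and follows essentially the same route as the paper: invoke the Suzuki classification, use the four distinct order-$3$ elements $f,f^{-1},m_{\alpha,\beta},m_{\alpha,\beta}^{-1}$ to exclude case (a), use $3\nmid q(q-1)/2$ to exclude case (b), and note that $s=1$ reduces case (d) to $\PSL(2,q)$ itself. Your explicit remark that a dihedral group (or any of its subgroups) has all elements of order greater than $2$ confined to a single cyclic subgroup, hence at most two elements of order $3$, is the detail the paper leaves implicit, and it is needed there since the dihedral group of order $q+1$ does have order divisible by $3$.
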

\begin{proof}
    In the notation of Theorem \ref{subgroups} we have $s=1$ and $d=2.$ Further, $q\equiv 2\pmod 3$ implies that $3\nmid \frac{q(q-1)}{2}.$ Since by Lemma \ref{lem:C1F_iff_gensgstransitive} $H_{\alpha,\beta }$ has at least four distinct elements of order $3,$ it follows that $H_{\alpha,\beta }$ is neither dihedral, nor a subgroup of a group $K$ of order $\frac{q(q-1)}{d}.$ Therefore it is either the entire group $\PSL(2,q),$ or isomorphic to one of $A_4,$ $S_4$ or $A_5.$
\end{proof}

We now show that for certain conditions on $q$ we can find pairs of 1-factors whose corresponding subgroup does not act transitively on $PG(1,q)$.

\begin{lemma}\label{c1f:primepowercomb}
    Let $q\equiv 2 \pmod 3$. If $q=p^\ell$ for some odd prime $p\geq 5$ and some integer $\ell\geq 2$, or $q=2^r$ for some odd composite $r$, then $\mathcal{F}_q$ is not a C1F.
\end{lemma}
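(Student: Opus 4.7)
The plan is to invoke Lemma~\ref{lem:C1F_iff_gensgstransitive}: it suffices to exhibit a 1-factor $F_{\alpha,\beta}\in\mathcal{F}_q\setminus\{F_{1,0}\}$ for which $H_{\alpha,\beta}=\langle f,m_{\alpha,\beta}\rangle$ fails to act transitively on $\PG(1,q)$. The uniform strategy in both cases is to arrange for both generators to lie in the subgroup $\PSL(2,q')$ coming from a proper subfield $\F_{q'}\subsetneq\F_q$, using the inclusion from Theorem~\ref{subgroups}(d). Such a subgroup preserves the subline $\PG(1,q')=\F_{q'}\cup\{\infty\}$, which is a proper subset of $\PG(1,q)$, and so cannot act transitively on the latter.

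To execute this idea, I use the matrix descriptions recalled in \S\ref{ss:construction_explicit}: $f=t_{0,1,-1,1}$ has determinant $1$, and $m_{\alpha,\beta}=t_{-\beta,\alpha^2+\alpha\beta+\beta^2,-1,\alpha+\beta}$ has determinant $\alpha^2$. If $\alpha,\beta\in\F_{q'}$, then both $f$ and $m_{\alpha,\beta}$ have matrix representatives with entries in $\F_{q'}$, and in each case the determinant is a square in $\F_{q'}$ (trivially, being $1$ or $\alpha^2$). Hence $H_{\alpha,\beta}\subseteq\PSL(2,q')\subsetneq\PSL(2,q)$ and the subfield stabilisation argument above gives non-transitivity on $\PG(1,q)$.

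For Case 1, $q=p^{\ell}$ with an odd prime $p\geq 5$ and $\ell\geq 2$, I take $\F_{q'}=\F_p$, which is proper since $\ell\geq 2$. Any $(\alpha,\beta)\in\F_p^{*}\times\F_p$ other than the two pairs $(1,0)$ and $(-1,1)$ that both represent $F_{1,0}$ serves as a witness, and since $p(p-1)\geq 20$ for $p\geq 5$ such pairs exist in abundance. For Case 2, $q=2^r$ with $r$ odd composite, let $a$ be the smallest prime divisor of $r$; then $3\leq a<r$ (since $r$ is odd and composite), so $\F_{q'}=\F_{2^a}$ is a proper subfield of $\F_{2^r}$. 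Any $(\alpha,\beta)\in\F_{2^a}^{*}\times\F_{2^a}$ other than the pairs $(1,0)$ and $(1,1)$ that represent $F_{1,0}$ in characteristic~$2$ works, and $(2^a-1)\cdot 2^a\geq 56$ leaves plenty of choices.

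I expect the main obstacle to be conceptual rather than computational: once one sees that the proper subfield $\F_{q'}$ supplies the required proper subgroup of $\PSL(2,q)$, the remaining steps are bookkeeping---verifying that the generators really do land in $\PSL(2,q')$ (not merely $\PGL(2,q')$, which is ensured by the determinant $\alpha^2$ being automatically a square) and that enough admissible pairs $(\alpha,\beta)$ exist so as to avoid recovering $F_{1,0}$. Once a valid pair is in hand, Lemma~\ref{lem:C1F_iff_gensgstransitive} immediately yields the conclusion.
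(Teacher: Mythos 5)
Your proposal is correct and follows essentially the same route as the paper: both arguments pick $(\alpha,\beta)$ in a proper subfield $\F_{q'}$ (with $q'=p$, resp.\ $q'=2^a$ for $a$ the smallest prime divisor of $r$) so that $\langle f, m_{\alpha,\beta}\rangle$ lands inside $\PSL(2,q')$, preserves the subline $\PG(1,q')$, and hence cannot act transitively on $\PG(1,q)$. Your explicit determinant check that the generators lie in $\PSL(2,q')$ rather than merely $\PGL(2,q')$ is a detail the paper leaves implicit, but the argument is the same.
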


\begin{proof}
    Under the conditions of the lemma we have a $5\leq q'<q$ such that $\F_{q'}\subset \F.$ Indeed if $q=p^{\ell}$, then take $q'=p.$ If $q=2^r,$ then take $q=2^{r'}$ for $r'$ the smallest prime divisor of $r.$ (Here $q'\equiv 2\pmod 3$ from the same property of $q.$)

    Consider some $\alpha \in \mathbb{F}_{q'}^\ast$ and $\beta \in \mathbb{F}_{q'}$ such that $F_{\alpha,\beta }\neq F_{1,0}.$ Such a pair exists, since there are at least $5(5-1)-2=20$ choices for the pair $(\alpha,\beta)$ outside of $\{(1,0),(1,-1)\}.$ For such a choice, all vertices in $\mathbb{F}_{q'}\cup \{\infty\}$ will appear in edges only with other vertices of $\mathbb{F}_{q'}\cup \{\infty\}$ in both $F_{\alpha,\beta}$ and $F_{1,0}$. Thus the subgraph $F_{\alpha,\beta }\cup F_{1,0}$ has no edges that include vertices from both $\F_{q'}\cup \{\infty\}$ and $\F\setminus \F_{q'},$ and is therefore disconnected.

    In other words, the choice of $(\alpha,\beta )$ implies that $H_{\alpha,\beta }=\langle f,m_{\alpha,\beta }\rangle $ is a subgroup of $\PSL(2,q').$ Therefore it leaves $\PG(1,q')\subset \PG(1,q)$ invariant, and does not act transitively on $\PG(1,q).$
\end{proof}

Therefore $\cF_q$ may only be a C1F if $q$ is an odd prime, or $q=2^p$ for some odd prime $p$.

\begin{lemma} \label{c1f:oddprimec1f}
    Let $q$ be an odd prime such that $q\equiv 2\pmod 3$. If $q>11$ then $\mathcal{F}_q$ is not a C1F.
\end{lemma}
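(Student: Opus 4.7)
The plan is to exhibit, for every odd prime $q>11$ with $q\equiv 2\pmod 3$, a pair $(\alpha,\beta)\in \mathbb{F}^{\ast}\times \mathbb{F}$ with $F_{\alpha,\beta}\neq F_{1,0}$ for which $H_{\alpha,\beta}=\langle f,m_{\alpha,\beta}\rangle\cong A_4$. Since $|A_4|=12<q+1$ whenever $q>11$, such a subgroup cannot act transitively on the $q+1$ points of $\PG(1,q)$, so Lemma~\ref{lem:C1F_iff_gensgstransitive} immediately yields the conclusion. By Corollary~\ref{cor:whatsubgroups}, the only alternatives for $H_{\alpha,\beta}$ are $S_4$, $A_5$, or $\PSL(2,q)$, so a single $A_4$ instance is enough.

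The group-theoretic input is the presentation $A_4\cong\langle x,y\mid x^3=y^3=(xy)^2=1\rangle$. Since both $f$ and $m_{\alpha,\beta}$ have order $3$, it suffices to choose $(\alpha,\beta)$ so that $fm_{\alpha,\beta}$ has order $2$ in $\PSL(2,q)$: then $H_{\alpha,\beta}$ is a quotient of $A_4$, and the only proper quotients $\{e\}$ and $\mathbb{Z}/3\mathbb{Z}$ admit at most one subgroup of order $3$, whereas Lemma~\ref{lem:C1F_iff_gensgstransitive} exhibits four distinct elements of order $3$ in $H_{\alpha,\beta}$ as soon as $F_{\alpha,\beta}\neq F_{1,0}$. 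To translate the order-$2$ condition into an equation, I would lift $f$ and $m_{\alpha,\beta}$ to $\GL(2,q)$ via the matrices implicit in \S\ref{ss:construction_explicit}; a direct computation shows the product has trace $-(\alpha^2+\alpha\beta+\beta^2-\alpha-\beta+1)$ and determinant $\alpha^2$, so after normalising to $\SL(2,q)$ the condition reduces to the plane conic
\[
    C:\quad \alpha^2 + (\beta-1)\alpha + (\beta^2-\beta+1) = 0.
\]

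To count its $\mathbb{F}$-points, observe that the associated symmetric bilinear form has non-zero determinant for $q$ odd (so $C$ is smooth), and that the homogeneous part $\alpha^2+\alpha\beta+\beta^2$ is anisotropic over $\mathbb{F}$ precisely when a primitive cube root of unity lies outside $\mathbb{F}$, i.e.\ when $q\equiv 2\pmod 3$; hence $C$ has no points at infinity in $\mathbb{P}^2$. A smooth projective conic over a finite field carries exactly $q+1$ rational points, so all $q+1$ of them lie in the affine plane. For $q\geq 17$ this is far more than enough to avoid the finite forbidden locus: Remark~\ref{rmk:quadwithnosol} rules out $\alpha=0$ (which would force $\beta^2-\beta+1=0$), and a direct substitution shows that neither $(1,0)$ nor $(-1,1)$ lies on $C$, these being the only pairs giving $F_{\alpha,\beta}=F_{1,0}$. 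Thus a valid $(\alpha,\beta)$ exists, $H_{\alpha,\beta}\cong A_4$, and the lemma follows.

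The main obstacle will be the careful bookkeeping around $\pm I$ in $\PSL(2,q)$: one must confirm that an $\SL(2,q)$-lift of $fm_{\alpha,\beta}$ with trace zero represents an element of order exactly $2$ in $\PSL(2,q)$, rather than the identity arising from a $\pm I$ lift. That identity case corresponds precisely to $m_{\alpha,\beta}=f^{-1}$ (and hence $F_{\alpha,\beta}=F_{1,0}$), which is exactly the excluded pair $(-1,1)$; once this edge case is handled, the remainder of the argument — the trace computation, the anisotropy of the leading form, and the $A_4$ identification via the triangle presentation — is routine.
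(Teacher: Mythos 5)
Your proof is correct, but it takes a genuinely different route from the paper's. The paper argues non-constructively: citing the enumeration from \cite{cameron_3_cycles_psl} that $\PSL(2,q)$ contains $\frac{q(q^2-1)}{24}$ subgroups isomorphic to $A_4$ and that each $C_3$ lies in $\frac{q+1}{3}$ of them, it double-counts incidences between the $\frac{q(q-1)}{2}$ subgroups $\langle m_{\alpha,\beta}\rangle$ and the $A_4$'s to conclude that two of these $C_3$'s must lie in a common $A_4$; the subgroup generated by the corresponding pair of 1-factors then has order at most $12<q+1$ and cannot act transitively. You instead exhibit such a pair explicitly: via the von Dyck presentation $A_4\cong\langle x,y\mid x^3=y^3=(xy)^2=1\rangle$ you reduce to making $fm_{\alpha,\beta}$ an involution, which (since a non-scalar element of $\PSL(2,q)$ for $q$ odd has order $2$ exactly when a lift has trace zero) is the conic $\alpha^2+\alpha\beta+\beta^2-\alpha-\beta+1=0$; this conic is smooth, has no points at infinity because the leading form $\alpha^2+\alpha\beta+\beta^2$ is anisotropic when $q\equiv 2\pmod 3$, and hence carries $q+1$ affine $\F$-points, comfortably avoiding the two excluded pairs $(1,0)$ and $(-1,1)$. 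Your bookkeeping checks out --- indeed the $\pm I$ worry is moot in odd characteristic, where scalar matrices have trace $\pm 2\neq 0$, and the quotient-of-$A_4$ argument combined with the four order-$3$ elements from Lemma \ref{lem:C1F_iff_gensgstransitive} correctly pins down $H_{\alpha,\beta}\cong A_4$, so the appeal to Corollary \ref{cor:whatsubgroups} is not even needed. The trade-off: the paper's counting argument is shorter and leans on a known external enumeration, while yours is constructive and self-contained modulo standard facts about conics over finite fields, and it has the added benefit of naming explicit disconnected pairs of 1-factors (e.g.\ $F_{1,0}\cup F_{4,1}$ for $q=17$).
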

\begin{proof}
    It follows from Lemma \ref{lem:C1F_iff_gensgstransitive} that it suffices to show that there are two different 1-factors, $F_{\alpha,\beta }$ and $F_{\alpha',\beta'}$ such that the action of the subgroup $H=\langle m_{\alpha,\beta }, m_{\alpha',\beta '}\rangle $ of $\PSL(2,q)$ is not transitive on $\PG(1,q).$ This follows if we show that there are two elements $m_{\alpha,\beta }$ and $m_{\alpha',\beta '},$ corresponding to different 1-factors, that are contained in a subgroup of $\PSL(2,q)$ that is isomorphic to $A_4.$ Indeed, this would mean that $|H|\leq |A_4|=12,$ and in turn that $H$ is not transitive on $\PG(1,q),$ which has more than $12$ elements.

    From \cite{cameron_3_cycles_psl} we know that if $q$ is odd and $3|q+1$ then $\PSL(2,q)$ contains $\frac{q(q^2-1)}{24}$ copies of $A_4$ and that each $C_3$ subgroup of ${{PSL}}(2,q)$ is contained in $\frac{q+1}{3}$ subgroups $A_4.$ There are $\frac{q(q-1)}{2}$ 1-factors in $\cF_q$, each corresponding to a distinct copy of $C_3$ inside $\PSL(2,q).$ Each one of these elements is contained in $\frac{q+1}{3}$ copies of $A_4$. If no two were contained in the same copy of $A_4,$ then there would be at least $\frac{q(q-1)(q+1)}{6}$ copies of $A_4$ in a subgroup of $PSL(2,q).$ Since there are only $\frac{q(q^2-1)}{24}$ such copies of $A_4$, this is a contradiction. This completes the proof.
\end{proof}

Thus for $\mathcal{F}_q$ to be a C1F, $q<11$ or $q=2^p$ for some odd prime $p$. To show that $\mathcal{F}_{2^p}$ is indeed a C1F, we need to make use of the following facts.
\begin{lemma} \cite[\S 2.5 Remarque]{Serre1972}\label{a4s4a5}
    The group $PSL(2,2^p)={{PGL}}(2,2^p)$ has no subgroup isomorphic to $S_4.$ It contains a subgroup isomorphic to $A_4$ or $A_5$ under the following conditions:
    \begin{enumerate}[(a)]
        \item $PSL(2,2^p)$ contains $A_4$ if and only if there exists an $x\in \mathbb{F}_{2^p}$ such that $x^2+x=1$.
        \item $PSL(2,2^p)$ contains $A_5$ if and only if there exist $x,y,z\in \mathbb{F}_{2^p}$ such that $x^2+x=1$ and $y^2+z^2=-1$.
    \end{enumerate}
\end{lemma}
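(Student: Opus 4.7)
The plan is to prove the three claims using Dickson's classification (Theorem \ref{subgroups}) together with explicit matrix computations in $\PSL(2, 2^p)$. The key structural input is that in characteristic $2$, the Sylow $2$-subgroups of $\PSL(2, 2^p)$ have order $2^p$ and are elementary abelian; concretely, one such subgroup is the unipotent group $U = \{u_a : a \in \mathbb{F}_{2^p}\}$ consisting of matrices $u_a = \left(\begin{smallmatrix} 1 & a \\ 0 & 1 \end{smallmatrix}\right)$. Identifying $U \cong (\mathbb{F}_{2^p}, +)$, its normalizer is the Borel subgroup $B = U \rtimes T$, where $T$ is cyclic of order $2^p - 1$ and acts on $U$ by scalar multiplication. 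A further fact I will exploit is that distinct Sylow $2$-subgroups of $\PSL(2, 2^p)$ intersect trivially, since they correspond to distinct fixed points on $\PG(1, 2^p)$.

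The non-existence of $S_4$ follows immediately: $S_4$ contains $4$-cycles of order $4$, but every $2$-element of $\PSL(2, 2^p)$ lies in an elementary abelian Sylow $2$-subgroup and hence has order at most $2$.

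For part (a), I would decompose $A_4 = V \rtimes \langle b \rangle$ with $V$ the Klein $4$-group and $\langle b\rangle$ of order $3$. For the forward direction, given a solution $\lambda$ to $x^2 + x = 1$ in $\mathbb{F}_{2^p}$ (equivalently $\lambda^3 = 1$ and $\lambda \neq 1$), take $V = \{u_0, u_1, u_\lambda, u_{1+\lambda}\}$ and let $b$ correspond to the diagonal matrix $\mathrm{diag}(\lambda, 1)$. A direct computation gives $b u_a b^{-1} = u_{\lambda a}$, so $b$ has order $3$ and cyclically permutes the three involutions of $V$, yielding $\langle V, b\rangle \cong A_4$. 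For the reverse direction, suppose $A_4 \leq \PSL(2, 2^p)$. Since $|V| = 4 > 1$ and distinct Sylow $2$-subgroups intersect trivially, $V$ lies in a unique Sylow $2$-subgroup $U_0$; hence $b$ normalizes $U_0$ and therefore lies in the corresponding Borel $B_0 = U_0 \rtimes T_0$. Being of odd order, $b$ is conjugate within $B_0$ into $T_0 \cong \mathbb{F}_{2^p}^*$ and acts on $U_0$ by scalar multiplication by some $\lambda \in \mathbb{F}_{2^p}^*$ of order $3$, which is the required solution.

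For part (b), the forward direction uses that a solution to $x^2 + x = 1$ forces $\mathbb{F}_4 \hookrightarrow \mathbb{F}_{2^p}$, hence $2 \mid p$; Dickson's classification case (d) then gives $\PSL(2, 4) \cong A_5$ as a subfield subgroup of $\PSL(2, 2^p)$. The extra condition $y^2 + z^2 = -1$ is automatic in characteristic $2$, since $-1 = 1 = 1^2 + 0^2$. The reverse direction is immediate: $A_4 \leq A_5 \leq \PSL(2, 2^p)$ reduces the problem to part (a). I expect the main obstacle to be the reverse direction of part (a): establishing that a Klein $4$-subgroup sits in a unique Sylow $2$-subgroup (so that the $C_3$-complement is forced into the corresponding Borel) and identifying the torus action on the unipotent radical with scalar multiplication on $\mathbb{F}_{2^p}$ both rely on the geometric interpretation of $\PSL(2, 2^p)$ acting on $\PG(1, 2^p)$, and must be set up carefully before the algebraic conclusion can be drawn.
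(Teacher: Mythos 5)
Your proposal is correct, but note that the paper does not actually prove this lemma: it is quoted directly from Serre \cite[\S 2.5 Remarque]{Serre1972}, so there is no internal argument to compare against, and what you have produced is a self-contained elementary proof of the cited result. Your argument holds up at every step: $S_4$ is excluded because the Sylow $2$-subgroups are elementary abelian so no element of order $4$ exists; a root $\lambda$ of $x^2+x+1$ yields the Klein four-group $\{u_0,u_1,u_\lambda,u_{\lambda^2}\}$ (closed under addition because it is $\F_4$) normalised by an order-$3$ torus element, giving $A_4$; and conversely the trivial-intersection property of the Sylow $2$-subgroups forces the order-$3$ complement of $V\le A_4$ into the Borel of the unique Sylow $2$-subgroup containing $V$, where by Schur--Zassenhaus it is conjugate into the torus and must act on the unipotent radical by a scalar of multiplicative order $3$, i.e.\ a root of $x^2+x+1$. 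Two details worth tightening: the torus $\{\mathrm{diag}(t,t^{-1})\}$ acts on $u_a$ by $a\mapsto t^2a$ rather than $a\mapsto ta$, but since squaring is a bijection of $\F_{2^p}$ the action is still faithful and realises every scalar, so your conclusion is unaffected; and Theorem \ref{subgroups} only classifies which subgroups \emph{can} occur, so for the forward direction of (b) you should exhibit $\PSL(2,4)\le \PSL(2,2^p)$ directly as the matrices with entries in $\F_4$ (available exactly when $2\mid p$, i.e.\ exactly when $x^2+x=1$ is solvable) together with the standard isomorphism $\PSL(2,4)\cong A_5$. What your route buys is self-containedness and an explanation of why the polynomial $x^2+x+1$ governs the presence of $A_4$ --- the very polynomial that Remark \ref{rmk:quadwithnosol} shows has no root in $\F_q$, which is how the lemma gets applied in Lemma \ref{c1f:2primepowerc1f}; what the citation buys is brevity and coverage of odd characteristic, where the condition $y^2+z^2=-1$ actually has content (in characteristic $2$ it is vacuous, as you correctly observe).
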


\begin{lemma}\label{c1f:2primepowerc1f}
    If $q=2^p$ for some odd prime $p$, then $\mathcal{F}_q$ is a C1F.
\end{lemma}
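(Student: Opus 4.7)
The plan is to invoke Lemma \ref{lem:C1F_iff_gensgstransitive} and show that for every nontrivial pair $(\alpha,\beta)$, the subgroup $H_{\alpha,\beta}=\langle f,m_{\alpha,\beta}\rangle$ is forced to be all of $\PSL(2,2^p)$, which acts transitively on $\PG(1,2^p)$. Since $q=2^p$, we apply Theorem \ref{subgroups} with characteristic $2$, $s=p$ and $d=\gcd(2,2-1)=1$, and we use the fact (from Lemma \ref{lem:C1F_iff_gensgstransitive}) that $H_{\alpha,\beta}$ contains at least four elements of order $3$, so it has at least two distinct subgroups of order $3$.

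I would then eliminate the options of Theorem \ref{subgroups} one by one. For case (a), any dihedral subgroup contains at most two elements of order $3$ (they form the unique cyclic subgroup of order $3$ when one exists), so this is ruled out. For case (b), the order of such a subgroup divides $q(q-1)=2^p(2^p-1)$; since $p$ is odd, $q\equiv 2\pmod 3$ and $q-1\equiv 1\pmod 3$, so $|K|$ is coprime to $3$ and no subgroup can contain an element of order $3$. For case (c), Lemma \ref{a4s4a5}(a) says $A_4\leq \PSL(2,2^p)$ exactly when $x^2+x=1$ has a root in $\mathbb{F}_{2^p}$; in characteristic $2$ this equation coincides with $x^2-x+1=0$, which by Remark \ref{rmk:quadwithnosol} has no solution. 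Hence $A_4$ (and consequently $A_5$, by Lemma \ref{a4s4a5}(b)) does not embed, and $S_4$ is excluded directly by Lemma \ref{a4s4a5}. Finally, for case (d), the divisors of $s=p$ are $1$ and $p$; the subgroup $\PSL(2,2)\cong S_3$ again has only two elements of order $3$, so the only remaining possibility is $H_{\alpha,\beta}=\PSL(2,2^p)$ itself.

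Since $\PSL(2,2^p)$ acts transitively on $\PG(1,2^p)$, this yields transitivity of $H_{\alpha,\beta}$ for every admissible $(\alpha,\beta)$, and Lemma \ref{lem:C1F_iff_gensgstransitive} then gives that $\mathcal{F}_{2^p}$ is a C1F. I do not anticipate a serious obstacle: every step is an appeal to a result already in the paper, the only subtle point being the recognition that in characteristic $2$ the polynomial $x^2+x+1$ of Lemma \ref{a4s4a5} is literally the polynomial ruled out by Remark \ref{rmk:quadwithnosol}, which makes the $A_4/A_5$ case collapse cleanly.
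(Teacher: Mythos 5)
Your proof is correct and follows essentially the same route as the paper: reduce to transitivity of $H_{\alpha,\beta}$ via Lemma \ref{lem:C1F_iff_gensgstransitive}, rule out $A_4$, $S_4$ and $A_5$ using Remark \ref{rmk:quadwithnosol} together with Lemma \ref{a4s4a5}, and conclude $H_{\alpha,\beta}=\PSL(2,q)$ from the subgroup classification. The only difference is that you eliminate cases (a), (b) and (d) of Theorem \ref{subgroups} explicitly for $q=2^p$, whereas the paper invokes Corollary \ref{cor:whatsubgroups} (stated there only for odd prime $q$); your version is, if anything, slightly more careful on that point.
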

\begin{proof}
    By Lemma \ref{lem:C1F_iff_gensgstransitive} it suffices to show that $H_{\alpha,\beta}=\langle f,m_{\alpha,\beta} \rangle$ acts transitively on $\PG(1,q)$ whenever $F_{\alpha,\beta }\neq F_{1,0}.$ By Remark \ref{rmk:quadwithnosol} there is no $x\in \F_q$ such that $x^2+x+1=0.$ In characteristic $2$ this is equivalent to $x^2+x=1.$ Thus by Lemma \ref{a4s4a5} above, $H_{\alpha,\beta }$ is not isomorphic to $S_4,$ $A_4$ or $A_5.$ It follows from Corollary \ref{cor:whatsubgroups} that $H_{\alpha,\beta }=\PSL(2,q)$ is the entire group. It therefore acts transitively on $\PG(1,q).$ This completes the proof.
\end{proof}

The proof of Theorem \ref{firstmain} follows by combining Lemmas \ref{c1f:primepowercomb}, \ref{c1f:oddprimec1f}, and \ref{c1f:2primepowerc1f} with the knowledge from \cite{mitchell2022factorisation} that $\mathcal{F}_2,\mathcal{F}_5, $ and $\mathcal{F}_{11}$ are C1Fs.

\section{Uniform 1-Factorisations} \label{s:U1Fs}
In this section we prove Theorem 1.2, noting that $\mathcal{F}_5$ and $\mathcal{F}_8$ were shown to be U1Fs (and UC1Fs) in \cite{mitchell2022factorisation}.

For two distinct 1-factors $F_1$ and $F_2$ of a hypergraph, we say that a pair of vertices, $B=\{v_1,v_2\}$, is \emph{repeated} in the pair $F_1$ and $F_2$ if $B\subseteq e$ for some edge $e\in F_1$ and $B\subseteq e'$ for some edge $e'\in F_2$. We call the number of repeated pairs in a pair of 1-factors the \emph{pair overlap number}. If each pair of distinct 1-factors of a 1-factorisation have the same pair overlap number, we call that the \emph{pair overlap number} of the 1-factorisation. Davies et al.\ \cite{mitchell2022factorisation} showed that if a U1F of $K_n^3$ exists then the pair overlap number of the 1-factorisation is 2. Thus in order to prove that $\mathcal{F}_q$ is not a U1F, we need only show that there exist two distinct 1-factors with pair overlap number not equal to 2.

Let $F_{1,0}$ and $F_{\alpha,\beta}$ be distinct 1-factors of $\mathcal{F}_q$, with corresponding functions $f$ and $m_{\alpha,\beta}$ for $\alpha \in \mathbb{F}^*$ and $\beta \in \mathbb{F}$. Recall that this is the case if and only if $(\alpha,\beta)\not\in \{(1,0),(-1,1)\}$ in $\mathbb{F}$. Observe that the pair overlap number of $F_{1,0}$ and $F_{\alpha,\beta}$ is
$$
    |\{x\in \mathbb{F} \cup \{\infty\} \,\, : \,\, f(x) = m_{\alpha,\beta}(x)\}| + |\{x\in \mathbb{F} \cup \{\infty\} \, : \, f^{-1}(x) = m_{\alpha,\beta}(x)\}|.
$$

This means that every repeated pair corresponds to a solution to either $f(x)=m_{\alpha,\beta}(x)$ or $f^{-1}(x)=m_{\alpha,\beta}(x)$.

We will now consider the number of solutions for $f(x)=m_{\alpha,\beta}(x)$ with values of $\alpha$ and $\beta$ that result in $F_{\alpha,\beta}$ being distinct from $F_{1,0}$. A solution to $f(x)=m_{\alpha,\beta}(x)$ gives us the equation
$$
    \frac{1}{1-x}=\beta + \frac{\alpha^2}{\alpha+\beta-x}.$$
We note that for $x=\infty$, $f(\infty)=0$ and $m_{\alpha,\beta}(\infty)=\beta$ so there is at least one solution if $\beta = 0$, and only one if $\beta=0$, $\alpha = -1$, and $F_{1,0}\neq F_{-1,0}$. Further, if $\beta=0$, $\alpha\not\in \{-1,1\}$, then we also get the solution $x=\frac{\alpha}{1+\alpha}$. If $\alpha+\beta = 1$ then the only solutions are $x=1$ and $x=-\alpha$. (Note $\left(\alpha,\beta\right)\neq \left(1,0\right)$. We may have $\alpha=-1$.) We now consider the case where $\alpha+\beta\neq 1$. Then $f(x)=m_{\alpha,\beta}(x)$ implies $x\not \in \{1,\alpha+\beta\}$, every solution is in $\mathbb{F}$ and $f(x)=m_{\alpha,\beta}(x)$ is equivalent to
\begin{align}\label{f=m}
    0=(\alpha^2+\alpha\beta-\alpha +\beta^2-\beta)-x(\alpha^2+\alpha\beta+\beta^2+\beta-1)+\beta x^2.
\end{align}

We will now consider the number of solutions for $f^{-1}(x)=m_{\alpha,\beta}(x)$ for values of $\alpha$ and $\beta$ that result in $F_{\alpha,\beta}$ being distinct from $F_{1,0}$. A solution to $f^{-1}(x)=m_{\alpha,\beta}(x)$ gives us the equation
$$
    1-\frac{1}{x}=\beta + \frac{\alpha^2}{\alpha+\beta-x}.$$
Note that $f^{-1}(\infty)=1$ and $m_{\alpha,\beta}(\infty)=\beta$, so there is at least one solution if $\beta = 1$. If $\beta=1$ we may assume $\alpha \neq -1$ as we require $F_{\alpha,\beta}$ to be distinct from $F_{1,0}$, and then we have the solution $x=\frac{1}{1-\alpha}$ which is $\infty$ if $\alpha=1$. If $\alpha+\beta =0$ then $f^{-1}(0)=m_{\alpha,\beta}(0)=\infty,$ so $x=0$ and $x=1-\alpha$ give solutions. Now assume that $\beta \neq 1$ and $\alpha+\beta \neq 0.$ Then $f^{-1}(x)=m_{\alpha,\beta}(x)$ implies $x\notin \{0,\alpha+\beta \},$ every solution is in $\F$ and $f^{-1}(x)=m_{\alpha,\beta}(x)$ is equivalent to
\begin{align}\label{finv=m}
    0=(1-\beta)x^2+x(\alpha^2+\alpha\beta +\beta^2-\alpha-\beta-1)+(\alpha+\beta).
\end{align}

To summarise the above we have the following.

$$\begin{array}{p{3.4cm}|p{11cm}}
        $\hskip .19cm\text{Conditions}$                                                                                                             & $\{x\in \F\cup\{\infty\}\mid f(x)=m_{\alpha,\beta}(x) \}$                                                          \\ \hline \hline
        $\hskip .19cm \beta=0,\  \alpha = -1$                                                                                                       & $\{\infty\}$                                                                                                       \\ \hline
        $\hskip .19cm \beta=0,\  \alpha\not\in \{-1,1\}$                                                                                            & $\{\infty, \frac{\alpha}{1+\alpha }\}$                                                                             \\ \hline
        $\hskip .19cm \beta \neq 0,\ \alpha+\beta=1$                                                                                                & $\{1,-\alpha \}$                                                                                                   \\ \hline
        $\begin{array}{p{3.5cm}}$\beta \neq 0,\ \alpha+\beta \neq 1 ,$\\ $((\alpha,\beta)\neq (-1, 1))$\end{array}$ & $\{x\in \F\mid \beta x^2-(\alpha^2+\alpha\beta+\beta^2+\beta-1)x+(\alpha^2+\alpha\beta+\beta^2-\alpha-\beta)=0 \}$
    \end{array}$$

$$\begin{array}{p{3.4cm}|p{11cm}}
        $\hskip .19cm\text{Conditions}$                                                                                                            & $\{x\in \F\cup\{\infty\}\mid f^{-1}(x)=m_{\alpha,\beta}(x) \}$                                    \\ \hline \hline
        $\hskip .19cm \beta=1,\  \alpha = 1$                                                                                                       & $\{\infty\}$                                                                                      \\ \hline
        $\hskip .19cm\beta=1,\ \alpha\not\in \{-1,1\}$                                                                                             & $\{\infty, \frac{1}{1-\alpha }\}$                                                                 \\ \hline
        $\hskip .19cm\beta \neq 1,\ \alpha+\beta=0$                                                                                                & $\{0,1-\alpha \} $                                                                                \\ \hline
        $\begin{array}{p{3.5cm}}$\beta \neq 1,\ \alpha+\beta \neq 0,$\\$\left((\alpha,\beta)\neq (-1, 1)\right)$\end{array}$ & $\{x\in \F\mid (1-\beta) x^2+(\alpha^2+\alpha\beta+\beta^2-\alpha -\beta-1)x+(\alpha+\beta)=0 \}$
    \end{array}$$

We use the information in the tables to show that $\mathcal{F}_q$ is not a U1F if $q\notin \{5,8\}.$ The cases of $5|q$ and $2|q$ are treated separately from that of other primes. We start with the case of primes greater than 5.

\begin{lemma}\label{lem:oddnot5powernotU1F}
    Let $q=p^{\ell}$ for some prime $p> 5$ and some integer $\ell \geq 1$ such that $q\equiv 2\pmod 3$. Then $\mathcal{F}_q$ is not a U1F.
\end{lemma}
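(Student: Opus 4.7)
The plan is to produce, for each $q=p^\ell$ with $p>5$ prime and $q\equiv 2\pmod 3$, one explicit pair of distinct $1$-factors in $\mathcal{F}_q$ whose pair overlap number differs from $2$. Since Davies et al.\ \cite{mitchell2022factorisation} showed that in any U1F of $K_n^3$ the pair overlap of every pair of $1$-factors equals $2$, exhibiting a single bad pair suffices to conclude that $\mathcal{F}_q$ is not a U1F.

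The pair I would use is $F_{1,0}$ and $F_{-1,0}$. Since $p>5$ forces $p\neq 2$, we have $-1\neq 1$ in $\mathbb{F}_q$, so $(\alpha,\beta)=(-1,0)\notin\{(1,0),(-1,1)\}$, and hence $F_{-1,0}$ is genuinely distinct from $F_{1,0}$. To evaluate the pair overlap I would read off the two tables at $(\alpha,\beta)=(-1,0)$. The row $\beta=0$, $\alpha=-1$ of the first table yields the solution set $\{\infty\}$, contributing $1$. For the second table, the conditions $\beta=0\neq 1$, $\alpha+\beta=-1\neq 0$, and $(\alpha,\beta)\neq(-1,1)$ place us in the generic row, and substitution into the quadratic given there produces
$$x^2+x-1=0,$$
whose discriminant is $5$.

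Under the hypothesis $p>5$, the element $5$ is nonzero in $\mathbb{F}_q$, so this quadratic has either $0$ or $2$ roots in $\mathbb{F}_q$ according to whether $5$ is a non-square or a square. The pair overlap number of $F_{1,0}$ and $F_{-1,0}$ is therefore either $1+0=1$ or $1+2=3$, but never $2$, proving the lemma. There is no significant obstacle: the argument reduces to a single discriminant computation, and the role of the hypothesis $p>5$ is precisely to ensure that $5\ne 0$ in $\mathbb{F}_q$. It is worth noting that this is exactly what fails at $p=5$, where $x^2+x-1$ acquires a double root at $x=2$ and the overlap becomes $1+1=2$; this is consistent with the fact, established separately in the paper, that $\mathcal{F}_5$ actually is a U1F.
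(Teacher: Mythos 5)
Your proposal is correct and follows essentially the same route as the paper: both use the pair $F_{1,0}$, $F_{-1,0}$, count the single solution $\infty$ to $f(x)=m_{-1,0}(x)$, and observe that $f^{-1}(x)=m_{-1,0}(x)$ reduces to $x^2+x-1=0$ with discriminant $5$, giving overlap $1$ or $3$. Your added remark explaining why the argument degenerates at $p=5$ is a nice consistency check, but otherwise the two arguments coincide.
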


\begin{proof}
    Let $F_{1,0}$ and $F_{-1,0}$ be 1-factors of $\mathcal{F}_q$; we shall prove that the pair overlap number of this pair of 1-factors is not 2. $F_{1,0}$ and $F_{-1,0}$ are distinct, and from above we know that there is only one repeated pair corresponding to a solution to $f(x)=m_{-1,0}(x)$. Further, we know that  $\{x\in \F_q \cup \{\infty\}\  |\  f^{-1}(x)=m_{-1,0}(x)\}=\{x\in \F_q \ |\  x^2+x-1 = 0\}$, and $x^2+x-1 = 0$ will have 2 solutions in $\F_q$ if $5$ is a quadratic residue, and 0 if not. Thus the pair overlap number of this pair of 1-factors must be either 1 or 3, and thus $\mathcal{F}_q$ is not a U1F.
\end{proof}

\begin{lemma}\label{lem:powersof5notU1F}
    Let $q=5^{\ell}$ for some integer $\ell>1$ such that $q\equiv 2\pmod 3$. Then $\mathcal{F}_q$ is not a U1F.
\end{lemma}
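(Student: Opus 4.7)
Following the strategy of Lemma \ref{lem:oddnot5powernotU1F}, the plan is to exhibit, for each $q = 5^{\ell}$ satisfying the hypothesis (which forces $\ell$ to be odd and hence $\ell \geq 3$), a pair of distinct 1-factors of $\mathcal{F}_q$ whose pair overlap number is not $2$. I would try $F_{1,0}$ together with $F_{\alpha,1-\alpha}$ for a suitably chosen $\alpha \in \F \setminus \{0,1,-1\}$. Restricting to $\beta = 1-\alpha$ lands on the third row of the first table and immediately gives $\{x : f(x) = m_{\alpha,\beta}(x)\} = \{1,-\alpha\}$, contributing $2$ to the pair overlap; the exclusion $\alpha \notin \{0,1,-1\}$ ensures $F_{\alpha,1-\alpha} \neq F_{1,0}$ and $|\{1,-\alpha\}|=2$. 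The whole task then reduces to arranging $|\{x : f^{-1}(x) = m_{\alpha,1-\alpha}(x)\}| = 2$, so that the total pair overlap becomes $4$.

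Substituting $\beta = 1-\alpha$ into the quadratic in the last row of the second table reduces it to
\[ \alpha x^2 + (\alpha^2 - \alpha - 1)\,x + 1 = 0. \]
A short expansion shows the discriminant equals $(\alpha^2 - 3\alpha + 1)(\alpha^2 + \alpha + 1)$. The crucial observation is the characteristic-$5$ collapse $\alpha^2 - 3\alpha + 1 = (\alpha+1)^2$, forced by $-3 \equiv 2 \pmod 5$, so the discriminant becomes $(\alpha + 1)^2 (\alpha^2 + \alpha + 1)$. By Remark \ref{rmk:quadwithnosol} the polynomial $x^2 - x + 1$ has no root in $\F$; applying $x \mapsto -x$, neither does $\alpha^2 + \alpha + 1$. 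Hence for $\alpha \neq -1$ the discriminant is a nonzero quadratic residue in $\F$ precisely when $\alpha^2 + \alpha + 1$ is.

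To produce such an $\alpha$ I would invoke the standard character-sum identity $\sum_{\alpha \in \F} \chi(\alpha^2 + \alpha + 1) = -1$, where $\chi$ is the quadratic character on $\F$. Since $\alpha^2 + \alpha + 1$ has no zeros on $\F$, this yields exactly $(q-1)/2$ values of $\alpha$ making it a nonzero square; subtracting the three forbidden values still leaves at least $(5^{\ell}-1)/2 - 3 \geq 59$ admissible choices. For any such $\alpha$ the quadratic has two distinct roots in $\F$, so (after a short verification that these two new repeated pairs are distinct from the pair $\{1,\infty\}$ and the pair $\{-\alpha, \tfrac{1}{1+\alpha}\}$ obtained from the $f=m$ side, which again reduces to $\alpha^2 + \alpha + 1 \neq 0$) the pair overlap equals $4 \neq 2$ and $\mathcal{F}_q$ is not a U1F. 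The main obstacle I anticipate is identifying this one-parameter family: $\beta = 1 - \alpha$ is special because it hands us two matches for free from the $f=m$ side, and because its discriminant degenerates in characteristic $5$ into a product of an explicit square and the polynomial $\alpha^2 + \alpha + 1$, reducing what would otherwise be a delicate quartic squareness question to a standard character-sum count.
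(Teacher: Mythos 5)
Your proposal is correct, and the opening moves coincide with the paper's: the paper also uses the family $F_{\alpha,1-\alpha}$, gets the two free solutions $\{1,-\alpha\}$ on the $f=m$ side, arrives at the same quadratic $\alpha x^2+(\alpha^2-\alpha-1)x+1=0$, and factors its discriminant as $(\alpha+1)^2(\alpha^2+\alpha+1)$ in characteristic $5$. Where you genuinely diverge is in producing an $\alpha$ for which $\alpha^2+\alpha+1$ is a nonzero square: you invoke the character-sum evaluation $\sum_{\alpha\in\F}\chi(\alpha^2+\alpha+1)=-\chi(1)=-1$ (valid since the discriminant $-3=2\neq 0$ in $\F_{5^\ell}$), which together with Remark \ref{rmk:quadwithnosol} gives exactly $(q-1)/2$ good values of $\alpha$, comfortably more than the three excluded ones. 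The paper instead stays elementary: it runs a parallel computation for the family $F_{\alpha,-\alpha}$ (whose discriminant is $(\alpha-1)^2(\alpha^2-\alpha+1)$), and observes that if both $\alpha^2+\alpha+1$ and $\alpha^2-\alpha+1$ are non-squares then their product $(\alpha^2)^2+\alpha^2+1$ is a square, so $F_{\alpha^2,1-\alpha^2}$ works --- using that $\ell$ odd forces $\alpha^2\notin\F_5$ when $\alpha\notin\F_5$. Your route needs only one family and one clean counting identity, at the cost of importing the quadratic character sum for $\chi(a\alpha^2+b\alpha+c)$; the paper's route avoids any character-sum machinery but needs the auxiliary family and the non-constructive ``one of these three factors works'' case split. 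One small remark: the closing ``short verification'' that the four repeated pairs are distinct is not actually needed, since the paper's formula for the pair overlap number is already the sum of the two solution-set cardinalities, which your argument shows is $2+2=4$.
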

\begin{proof}
    As in the proof of Lemma \ref{lem:oddnot5powernotU1F} we show that there is a choice of $\alpha,\beta$ such that the pair overlap number of $F_{1,0}$ and $F_{\alpha,\beta }$ is not 2. This implies that $\mathcal{F}_q$ is not a U1F. We shall show that for $\alpha\in \F\setminus\F_5$ the factor $F_{1,0}$ has a pair overlap number of $4$ with at least one of $F_{\alpha,-\alpha},$ $F_{\alpha,1-\alpha}$ or $F_{\alpha^2,1-\alpha^2}.$

    It follows from the tables above that if $\alpha\in \F\setminus\F_5$ and we set $\beta =-\alpha$ then $f^{-1}(x)=m_{\alpha,-\alpha }$ has the two distinct solutions $0$ and $1-\alpha$. The solutions of $f(x)=m_{\alpha,-\alpha }$ are $x\in \F$ such that $\alpha x^2+(\alpha^2-\alpha-1)x-\alpha^2=0.$ The discriminant is $D_1=(\alpha-1)^2\cdot (\alpha^2-\alpha+1).$ Therefore the pair overlap number between $F_{1,0}$ and $F_{\alpha,-\alpha}$ is $4$ if $\alpha^2-\alpha+1$ is a square in $\mathbb{F}$.

    Now set $\beta =1-\alpha$ in the tables above. If $\alpha\in \F\setminus\F_5$ then $f(x)=m_{\alpha,1-\alpha }$ has the two distinct solutions, $1$ and $-\alpha$. The solutions of $f^{-1}(x)=m_{\alpha,1-\alpha}$ are $x\in \F$ such that $\alpha x^2+(\alpha^2 -\alpha-1)x+1=0.$ The discriminant is $D_2=(\alpha+1)^2\cdot (\alpha^2+\alpha+1).$  Therefore the pair overlap number between $F_{1,0}$ and $F_{\alpha,1-\alpha}$ is $4$ if $\alpha^2+\alpha+1$ is a square in $\mathbb{F}$.

    Now take an $\alpha	\in \F\setminus\F_5.$ If $\alpha^2-\alpha+1\in \F^2$ or $\alpha^2+\alpha+1\in \F^2$ then the pair overlap number of $F_{1,0}$ with $F_{\alpha,-\alpha}$ or with $F_{\alpha,1-\alpha}$ is not $2$ by the above paragraphs. Recall that $\F^{\ast}$ is a cyclic group, therefore the product of two non-squares is a square. Therefore if $\alpha^2-\alpha+1\notin \F^2$ and $\alpha^2+\alpha+1\notin \F^2,$ then their product is a square:
    $(\alpha^2-\alpha+1)(\alpha^2+\alpha+1)=(\alpha^2)^2+\alpha^2+1\in \F^2.$

    Observe that $q\equiv 2\pmod 3$ implies that $\ell$ is odd. Therefore $\F$ does not contain the field of $25$ elements. This implies that for $\alpha\in \F\setminus\F_5$ we have $\alpha^2\in \F\setminus\F_5$. Thus using similar working to above, the pair overlap number of $F_{1,0}$ and $F_{\alpha^2,1-\alpha^2}$ is $4$.
\end{proof}

We now turn our attention to the case where $q= 2^{\ell}$ for $\ell$ an odd integer, $\ell>3$. We shall show that then $\mathcal{F}_q$ is not a U1F by proving that there is an $\alpha\in \F_q\setminus \{0,1\}$ such that the pair overlap number of $F_{1,0}$ with $F_{\alpha,1}$ or $F_{\alpha,0}$ is not $2$. As in the case of odd characteristic, the proof involves considering the number of solutions of the quadratic equations \eqref{f=m} and \eqref{finv=m} in special cases. To do so we recall the following.

\begin{lemma}\label{lem:Galoisfacts}
    Let $\ell$ be a positive integer and set $\F=\F_{2^{\ell}}.$ The field extension $\F|\F_2$ is cyclic, its Galois group generated by the Frobenius automorphism $x\mapsto x^2.$ The trace map: ${\mathrm{Tr}}={\mathrm{Tr}}_{\F_2}^{\F}:\F\rightarrow \F_2$ given by
    \begin{equation}\label{eq:tracedef}
        {\mathrm{Tr}}(x)={\mathrm{Tr}}_{\F_2}^{\F_{2^{\ell}}}(x)=\sum_{i=0}^{\ell-1}x^{2^i}
    \end{equation}
    is an $\F_2-$linear map. For any $x\in \F$ we have $x^{2^{\ell}}=x.$ For $x\in \F$ there exists an $r\in \F$ such that $x=r^2+r$ if and only if ${\mathrm{Tr}}(x)=0.$
    A quadratic equation $x^2+Lx+C$ with $L\neq 0$ has two solutions in $\F$ if ${\mathrm{Tr}}\left(\frac{C}{L^2}\right)=0,$ and zero solutions otherwise. If $\ell$ is an odd integer then ${\mathrm{Tr}}(1)=1.$
\end{lemma}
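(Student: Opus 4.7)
The plan is to prove each assertion in Lemma \ref{lem:Galoisfacts} in the natural order, using standard Galois-theoretic facts for finite fields, with the Artin-Schreier-type equivalence in the middle being the only step that requires genuine thought.

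First I would set up the Galois group. The Frobenius $\phi(x)=x^2$ is an $\F_2$-automorphism of $\F$ since in characteristic $2$ it is additive (and multiplicative, and injective on a finite field). Every $x\in \F$ satisfies $x^{2^{\ell}}=x$, because for $x\neq 0$ this follows from $|\F^{\ast}|=2^{\ell}-1$; so $\phi^{\ell}=\mathrm{id}$. No smaller power works, since the polynomial $y^{2^j}-y$ has at most $2^j<2^{\ell}$ roots. As $|\mathrm{Gal}(\F|\F_2)|=[\F:\F_2]=\ell$, the Galois group is $\langle \phi\rangle$. Then $\mathrm{Tr}(x)=\sum_{\sigma}\sigma(x)=\sum_{i=0}^{\ell-1}x^{2^i}$ is $\F_2$-linear as a sum of additive maps, and scaling by elements of $\F_2=\{0,1\}$ is trivial. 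Finally, $\mathrm{Tr}(1)=\ell\bmod 2=1$ when $\ell$ is odd.

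The key step is the Artin-Schreier equivalence: $x=r^2+r$ for some $r\in \F$ if and only if $\mathrm{Tr}(x)=0$. Consider the map $\wp:\F\rightarrow \F$ given by $\wp(r)=r^2+r$. It is $\F_2$-linear (Frobenius being additive in characteristic $2$), with kernel $\{r:r^2+r=0\}=\F_2$, so $|\mathrm{image}(\wp)|=2^{\ell-1}$. A short telescoping computation, using $r^{2^{\ell}}=r$ and the definition \eqref{eq:tracedef}, gives $\mathrm{Tr}(r^2+r)=\mathrm{Tr}(r^2)+\mathrm{Tr}(r)=0$, so $\mathrm{image}(\wp)\subseteq \ker(\mathrm{Tr})$. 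Meanwhile $\mathrm{Tr}:\F\rightarrow \F_2$ is surjective (otherwise the polynomial $\sum_{i=0}^{\ell-1}y^{2^i}$ of degree $2^{\ell-1}<2^{\ell}$ would vanish on all of $\F$), so $|\ker(\mathrm{Tr})|=2^{\ell-1}$. Matching cardinalities forces $\mathrm{image}(\wp)=\ker(\mathrm{Tr})$.

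For the quadratic criterion, substitute $x=Ly$ into $x^2+Lx+C$: since $L\neq 0$, the equation becomes $L^2(y^2+y)+C=0$, i.e.\ $y^2+y=C/L^{2}$ (no sign is needed in characteristic $2$). By the preceding paragraph, this has a solution $y_0\in \F$ iff $\mathrm{Tr}(C/L^{2})=0$, and when it does, $y_0+1$ is a second distinct solution, yielding exactly two. The only obstacle anywhere in the argument is the dimension-matching in the Artin-Schreier step: it rests on both the non-vanishing of $\mathrm{Tr}$ and the telescoping identity for $\mathrm{Tr}(r^2+r)$, both of which use $r^{2^{\ell}}=r$; once those are in hand, the rest is bookkeeping.
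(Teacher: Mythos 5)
Your proof is correct, but it takes a genuinely different route from the paper: the paper proves this lemma entirely by citation, referring to Lang's \emph{Algebra} for the general trace facts, to the additive form of Hilbert's Theorem 90 for the equivalence ``$x=r^2+r$ for some $r$ iff $\mathrm{Tr}(x)=0$,'' and to the Artin--Schreier theorem (via Pommerening) for the root-count of $x^2+Lx+C$. You instead give a self-contained elementary argument: the order computation for the Frobenius pins down the Galois group, and the Artin--Schreier step is replaced by a clean dimension count --- the image of $\wp(r)=r^2+r$ has index $2$ because $\ker\wp=\F_2$, it lies in $\ker(\mathrm{Tr})$ by the telescoping identity $\mathrm{Tr}(r^2)=\mathrm{Tr}(r)$, and $\ker(\mathrm{Tr})$ also has index $2$ because the trace polynomial has degree $2^{\ell-1}<2^{\ell}$ and so cannot vanish identically. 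The substitution $x=Ly$ then reduces the quadratic criterion to this, with $y_0$ and $y_0+1$ giving exactly two roots. Everything checks out; the one point you leave implicit is that $\mathrm{Tr}$ really lands in $\F_2$ (i.e.\ $\mathrm{Tr}(x)^2=\mathrm{Tr}(x)$), which follows from the same telescoping you already use. What your approach buys is a proof readable without external references and visibly using only $x^{2^{\ell}}=x$ and characteristic $2$; what the paper's approach buys is brevity, since all of these are textbook facts.
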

\begin{proof}
    For general facts about the trace map, see \cite[\S 5]{LangAlgebra}. The fact that $x=r^2+r$ has a solution if and only if ${\mathrm{Tr}}(x)=0$ is the additive form of Hilbert's Theorem 90 \cite[Theorem 6.3]{LangAlgebra}. The statement about the number of roots of a quadratic equation follows from the Artin-Schreier theorem \cite[Theorem 6.4]{LangAlgebra} by a change of variables. See for example \cite[Proposition 1]{pommerening2000quadratic}.
\end{proof}

We now have the following.

\begin{lemma}\label{lem:ifinR_4overlap-oddpowerof2}
    Let $q=2^\ell$ with $\ell$ a positive odd integer, and set $\F=\F_{2^{\ell}},$ ${\mathrm{Tr}}={\mathrm{Tr}}^{\F}_{\F_2}$ as in \eqref{eq:tracedef}. If there exists an $\alpha \in \F^{\ast }$ such that
    \begin{equation}\label{eq:alphaaltcond}
        {\mathrm{Tr}}\left(\frac{\alpha}{(\alpha^2+\alpha+1)^2}\right)=0\text{ or }{\mathrm{Tr}}\left(\frac{\alpha^2}{(\alpha^2+\alpha+1)^2}\right)=0
    \end{equation}
    then ${\mathcal{F}}_q$ is not a U1F.
\end{lemma}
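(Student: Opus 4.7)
The plan is to mimic the approach used in Lemmas 4.2 and 4.3: produce, under the given hypothesis, an explicit pair of 1-factors of $\mathcal{F}_q$ whose pair overlap number is not $2$. By Davies, Maenhaut, and Mitchell's result that a U1F of $K_n^3$ must have pair overlap number $2$, this will force $\mathcal{F}_q$ not to be a U1F. The two candidate comparison pairs are $(F_{1,0},F_{\alpha,0})$ and $(F_{1,0},F_{\alpha,1})$, since these are precisely the pairs for which one of the two possible sources of repeated pairs automatically contributes two solutions, coming from the ``degenerate'' rows of the tables.

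First I would treat the case $\mathrm{Tr}\!\left(\alpha/(\alpha^2+\alpha+1)^2\right)=0$. Taking $\beta=0$, the table for $f(x)=m_{\alpha,0}(x)$ shows that (provided $\alpha\neq 1$) the solution set is $\{\infty,\alpha/(1+\alpha)\}$, giving exactly two repeated pairs from this equation. The equation $f^{-1}(x)=m_{\alpha,0}(x)$ falls in the generic row (since $\beta=0\neq 1$ and $\alpha+\beta=\alpha\neq 0$), and specialising the quadratic in \eqref{finv=m} to $\beta=0$ and using characteristic $2$ gives $x^2+(\alpha^2+\alpha+1)x+\alpha=0$. By Remark \ref{rmk:quadwithnosol}, the coefficient $L=\alpha^2+\alpha+1$ is nonzero, so Lemma \ref{lem:Galoisfacts} applies and the hypothesis $\mathrm{Tr}(\alpha/L^2)=0$ guarantees two distinct solutions in $\F$. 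Hence the pair overlap number of $F_{1,0}$ and $F_{\alpha,0}$ is $4$, not $2$. One small check needed is that $F_{\alpha,0}\neq F_{1,0}$, i.e.\ $\alpha\neq 1$; this is automatic because $\alpha=1$ yields $\mathrm{Tr}(1/1)=\mathrm{Tr}(1)=1$ for odd $\ell$, contradicting the hypothesis.

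Next I would treat the symmetric case $\mathrm{Tr}\!\left(\alpha^2/(\alpha^2+\alpha+1)^2\right)=0$ by taking $\beta=1$. Now the table for $f^{-1}(x)=m_{\alpha,1}(x)$ (with $\alpha\neq 1$, hence $F_{\alpha,1}\neq F_{1,0}$ in characteristic $2$) immediately contributes the two solutions $\{\infty, 1/(1+\alpha)\}$. The other equation $f(x)=m_{\alpha,1}(x)$ is in the generic row (since $\beta=1\neq 0$ and $\alpha+\beta=\alpha+1\neq 1$ once $\alpha\neq 0$), and specialising \eqref{f=m} at $\beta=1$ yields, in characteristic $2$, the quadratic $x^2+(\alpha^2+\alpha+1)x+\alpha^2=0$. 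Again $L=\alpha^2+\alpha+1\neq 0$, and the hypothesis $\mathrm{Tr}(\alpha^2/L^2)=0$ delivers two solutions by Lemma \ref{lem:Galoisfacts}. Thus the pair overlap number of $F_{1,0}$ and $F_{\alpha,1}$ is again $4$, and once more the $\alpha=1$ case is excluded by $\mathrm{Tr}(1)=1$ for odd $\ell$.

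The step I expect to require the most care is the translation between the generic-row quadratics of the two tables and the canonical form $x^2+Lx+C$ used in Lemma \ref{lem:Galoisfacts}, and confirming that every implicit non-degeneracy condition (rows one through three of the tables, distinctness of $F_{\alpha,\beta}$ from $F_{1,0}$, nonvanishing of $L$) is met by the chosen $\alpha$ — but all of these either follow from Remark \ref{rmk:quadwithnosol}, from $\ell$ being odd so that $\mathrm{Tr}(1)=1$, or from simple substitution. Once these checks are assembled, the two cases together establish the conclusion.
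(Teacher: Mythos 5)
Your proposal is correct and follows essentially the same route as the paper's own proof: specialise the two tables to $\beta=0$ and $\beta=1$ in characteristic $2$, get two solutions for free from the degenerate row, and use Lemma \ref{lem:Galoisfacts} with $L=\alpha^2+\alpha+1$ (nonzero by Remark \ref{rmk:quadwithnosol}) to turn each trace hypothesis into two further solutions, giving pair overlap number $4$. Your explicit checks that $\alpha\neq 1$ is forced by $\mathrm{Tr}(1)=1$ and that the generic-row conditions hold are exactly the ones the paper relies on.
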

\begin{proof}
    First note that for $\alpha=1$ the conditions in \eqref{eq:alphaaltcond} are not satisfied since $\mathrm{Tr}(1)\neq 0$, so we only consider $\alpha\neq 1$. Therefore $F_{1,0}$ is different from both $F_{\alpha,0}$ and from $F_{\alpha,1}.$ Recall the discussion at the beginning of the section about the pair overlap number of the two 1-factors, $F_{1,0}$ and $F_{\alpha,\beta }.$ Specialising the corresponding tables to characteristic $2$ and $\beta\in \{0,1\}$ we find that the solutions are as follows.
    $$\begin{array}{c|c|c}
            \text{Conditions}       & \{x\in \F\cup\{\infty\}\mid f(x)=m_{\alpha,\beta}(x) \} & \{x\in \F\cup\{\infty\}\mid f^{-1}(x)=m_{\alpha,\beta}(x) \} \\ \hline \hline
            \beta=0\ (\alpha\neq 1) & \{\infty, \frac{\alpha}{1+\alpha }\}                    & \{x\in \F\mid x^2+(\alpha^2+\alpha +1)x+\alpha=0 \}          \\ \hline
            \beta=1\ (\alpha\neq 1) & \{x\in \F\mid x^2+(\alpha^2+\alpha+1)x+\alpha^2=0 \}    & \{\infty, \frac{1}{1+\alpha }\}
        \end{array}$$
    It follows that if ${\mathrm{Tr}}\left(\frac{\alpha}{(\alpha^2+\alpha+1)^2}\right)=0$ then the quadratic equation
    $x^2+(\alpha^2+\alpha+1)x+\alpha=0$ has two solutions by Lemma \ref{lem:Galoisfacts} and then $F_{1,0}$ and $F_{\alpha,0}$ have a pair overlap number of $4$. On the other hand if ${\mathrm{Tr}}\left(\frac{\alpha^2}{(\alpha^2+\alpha+1)^2}\right)=0$ then $x^2+(\alpha^2+\alpha+1)x+\alpha^2=0$ has two solutions and therefore $F_{1,0}$ and $F_{\alpha,1}$ have a pair overlap number of $4$. Thus the pair overlap number of $\mathcal{F}_q$ is not two and this implies that $\mathcal{F}_q$ is not a U1F.
\end{proof}

We shall show that the conditions of Lemma \ref{lem:ifinR_4overlap-oddpowerof2} are always met by an $\alpha\in \F^{\ast }$ when $\F=\F_{2^t}$ for $t>3.$ We do this in two steps.

\begin{lemma}\label{lem:traceofeltplusrecipnonzero}
    Let $\ell$ be a positive odd integer and set $\F=\F_{2^{\ell}},$ ${\mathrm{Tr}}={\mathrm{Tr}}^{\F}_{\F_2}$ as in \eqref{eq:tracedef}. If there is no $\alpha\in \F^{\ast }$ satisfying \eqref{eq:alphaaltcond} then for every $x\in \F\setminus\{0,1\}$ we have ${\mathrm{Tr}}\left(x+\frac{1}{x}\right)=1.$
\end{lemma}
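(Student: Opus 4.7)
My plan is to exploit the hypothesis to obtain a one-sided trace implication for elements of $\F\setminus\{0,1\}$, and then close by a parity/counting argument using the bijection $y\mapsto 1/y$. The first step is to simplify the condition coming from the second half of \eqref{eq:alphaaltcond}: since the trace is Frobenius-invariant (so $\mathrm{Tr}(y^2)=\mathrm{Tr}(y)$ in characteristic $2$) and $\alpha^2/(\alpha^2+\alpha+1)^2$ is the square of $v:=\alpha/(\alpha^2+\alpha+1)$, the hypothesis becomes $\mathrm{Tr}(v)=1$ for every value $v$ attained by the map $\alpha\mapsto \alpha/(\alpha^2+\alpha+1)$, $\alpha\in\F^*$.

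The next step is to characterize the image of this map. Inverting, $v$ lies in the image iff $v\alpha^2+(v+1)\alpha+v=0$, i.e.\ $\alpha^2+\tfrac{v+1}{v}\alpha+1=0$, has a root in $\F$; the degenerate case $v=1$ comes from $\alpha=1$. Applying Lemma \ref{lem:Galoisfacts} with $L=(v+1)/v$ and $C=1$, for $v\neq 1$ a root exists iff $\mathrm{Tr}(v^2/(v+1)^2)=0$. Using Frobenius-invariance and the characteristic-$2$ identity $v/(v+1)=1+1/(v+1)$ together with $\mathrm{Tr}(1)=1$ (as $\ell$ is odd), this reduces to $\mathrm{Tr}(1/(v+1))=1$. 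Combining with the previous step and substituting $y=v+1$ yields the key intermediate statement: for every $y\in\F\setminus\{0,1\}$ with $\mathrm{Tr}(1/y)=1$, one has $\mathrm{Tr}(y)=0$.

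To finish, I set $N_{ij}=|\{y\in\F\setminus\{0,1\}: \mathrm{Tr}(y)=i,\ \mathrm{Tr}(1/y)=j\}|$. The bijection $y\leftrightarrow 1/y$ swaps the two trace coordinates, so $N_{01}=N_{10}$, and the intermediate statement gives $N_{11}=0$. Each fiber of $\mathrm{Tr}:\F\to\F_2$ has size $2^{\ell-1}$, and $0$ (resp.\ $1$) is the unique element of trace $0$ (resp.\ $1$) excluded by removing $\{0,1\}$, so every row and column total of $(N_{ij})$ equals $2^{\ell-1}-1$. The row sum for $\mathrm{Tr}(y)=1$ forces $N_{10}=2^{\ell-1}-1$, hence $N_{01}=2^{\ell-1}-1$ by symmetry, and then the row sum for $\mathrm{Tr}(y)=0$ forces $N_{00}=0$. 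Therefore every $y\in\F\setminus\{0,1\}$ satisfies $\mathrm{Tr}(y)+\mathrm{Tr}(1/y)=1$, which is precisely $\mathrm{Tr}(x+1/x)=1$ for every $x\in\F\setminus\{0,1\}$. The main obstacle I anticipate is the image characterization in the second step, where the Artin--Schreier condition from Lemma \ref{lem:Galoisfacts} must be tracked through two substitutions together with the characteristic-$2$ identity $v/(v+1)=1+1/(v+1)$; once this intermediate statement is in hand, the counting argument is essentially forced.
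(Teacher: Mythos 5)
Your proof is correct, and while it shares the paper's overall skeleton --- first show that no $y\in\F\setminus\{0,1\}$ has $\mathrm{Tr}(y)=\mathrm{Tr}(1/y)=1$, then upgrade this to ``the traces always differ'' by counting against the two trace fibers of size $2^{\ell-1}-1$ --- the way you establish the key intermediate implication is genuinely different. The paper takes $x\neq 1$ with $\mathrm{Tr}(x)=1$, writes $x=r^2+r+1$ via Hilbert 90, and adds the \emph{two} hypothesis conditions evaluated at $r$ and at $r+1$ (exploiting $(r+1)^2+(r+1)+1=r^2+r+1$) so that the numerators telescope to $r^2+(r+1)=x$, giving $\mathrm{Tr}(1/x)=0$ directly. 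You instead use only the \emph{second} condition of \eqref{eq:alphaaltcond}: Frobenius-invariance turns it into $\mathrm{Tr}(v)=1$ on the image of $\alpha\mapsto\alpha/(\alpha^2+\alpha+1)$, and you then characterize that image by inverting the map and applying the Artin--Schreier solvability criterion from Lemma \ref{lem:Galoisfacts} to $\alpha^2+\tfrac{v+1}{v}\alpha+1=0$, arriving at the equivalent statement $N_{11}=0$ after the substitution $y=v+1$. Your route is slightly longer (two applications of the quadratic criterion plus the substitution bookkeeping, all of which check out), but it buys a mild refinement: the conclusion already follows from the weaker hypothesis that $\mathrm{Tr}\bigl(\alpha^2/(\alpha^2+\alpha+1)^2\bigr)=1$ for all $\alpha\in\F^*$, whereas the paper's telescoping genuinely needs both conditions. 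The final counting step --- an injection (or a $2\times 2$ table with equal margins and one vanishing entry) between sets of size $2^{\ell-1}-1$ --- is the same in both arguments.
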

\begin{proof}
    Suppose that there is no $\alpha \in \mathbb{F}^*$ satisfying \eqref{eq:alphaaltcond}. Recall that since ${\mathrm{Tr}}:\F\rightarrow \F_2$ is a surjective $\F_2-$linear map, its kernel is an index $2$ subgroup of the additive group $\F$, call this subgroup $R$. Every element not in $R$ has trace $1.$ It therefore suffices to show that $x$ and $\frac{1}{x}$ are in different cosets of $R$ for every $x\in \F\setminus\{0,1\}.$

    First assume that $x\neq 1$ and ${\mathrm{Tr}}(x)=1$. Since ${\mathrm{Tr}}(1)=1$ we have ${\mathrm{Tr}}(x+1)=0$ and so $x=r^2+r+1$ for some $r\in \F\setminus \{0,1\}.$ Since neither $r$ nor $r+1$ satisfy the conditions in \eqref{eq:alphaaltcond} of Lemma \ref{lem:ifinR_4overlap-oddpowerof2}, we have that
    $${\mathrm{Tr}}\left(\frac{r^2}{(r^2+r+1)^2}\right)=1\text{ and }{\mathrm{Tr}}\left(\frac{r+1}{(r^2+r+1)^2}\right)=1.$$
    It follows that the sum of these elements has trace zero:
    $$0={\mathrm{Tr}}\left(\frac{r^2}{(r^2+r+1)^2}+\frac{r+1}{(r^2+r+1)^2}\right)={\mathrm{Tr}}\left(\frac{1}{r^2+r+1}\right)={\mathrm{Tr}}\left(\frac{1}{x}\right).$$

    Thus taking reciprocals maps $\{h\in \F\setminus \{1\}\, : \,{\mathrm{Tr}}(h)=1\}$ into $\{h\in \F^{\ast }\, :\, {\mathrm{Tr}}(h)=0\}.$ These sets both have $2^{\ell-1}-1$ elements, therefore this is in fact a bijection. That is, for any $x\in \F\setminus\{0,1\}$ we have ${\mathrm{Tr}}(x)\neq {\mathrm{Tr}}\left(\frac{1}{x}\right),$ hence ${\mathrm{Tr}}\left(x+\frac{1}{x}\right)=1.$
\end{proof}

Putting Lemmas \ref{lem:ifinR_4overlap-oddpowerof2} and \ref{lem:traceofeltplusrecipnonzero} together we see that ${\mathcal{F}}_q$ is not a U1F for $q=2^{\ell},$ $\ell$ an odd integer greater than 3, unless a somewhat strange condition is satisfied by $\F=\F_{2^{\ell}}$. We show that this condition is indeed strange, and is in fact not satisfied for any $\ell>3.$

\begin{lemma}\label{lem:nosuchthingasalwaystrace1}
    Let $\ell$ be a positive odd integer and set $\F=\F_{2^{\ell}},$ ${\mathrm{Tr}}={\mathrm{Tr}}^{\F}_{\F_2}$ as in \eqref{eq:tracedef}. If $\ell>3$ then not all $x\in \F\setminus\{0,1\}$ satisfy
    \begin{equation}\label{eq:trace1withrecip}
        {\mathrm{Tr}}\left(x+\frac{1}{x}\right)=1.
    \end{equation}
\end{lemma}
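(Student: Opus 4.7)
The plan is to argue by contradiction. Suppose $\mathrm{Tr}(x+1/x)=1$ for every $x\in \F\setminus\{0,1\}$; I shall translate this into a polynomial identity over $\F[X]$ and force a contradiction by inspecting a single low-order coefficient.

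Rewriting the hypothesis as $\mathrm{Tr}(x)+\mathrm{Tr}(1/x)=1$ on $\F\setminus\{0,1\}$, and using $\mathrm{Tr}(1)=1$ (Lemma \ref{lem:Galoisfacts}, since $\ell$ is odd), inversion bijects $\{y\in \F^*:\mathrm{Tr}(y)=0\}$ with $\{y\in \F:\mathrm{Tr}(y)=1,\ y\neq 1\}$; both sets have $2^{\ell-1}-1$ elements. Let $P(X):=\prod_{y}(X-y)$, the product taken over $\{y\in \F^*:\mathrm{Tr}(y)=0\}$. Since the polynomial $\mathrm{Tr}(X)=\sum_{i=0}^{\ell-1}X^{2^i}$ is monic of degree $2^{\ell-1}$ with roots exactly $\{y\in\F:\mathrm{Tr}(y)=0\}$, dividing by $X$ gives the explicit form $P(X)=\sum_{k=0}^{\ell-1}X^{2^k-1}$. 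The bijection above is then equivalent to the polynomial identity
\begin{equation*}
    X^{2^{\ell-1}-1}\,P(1/X)\;=\;P(X+1),
\end{equation*}
because both sides are monic of degree $2^{\ell-1}-1$ with root sets $\{1/y:\mathrm{Tr}(y)=0,\ y\neq 0\}$ and $\{y+1:\mathrm{Tr}(y)=0,\ y\neq 0\}$ respectively, and these coincide precisely when the hypothesis holds.

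Next I would compare the coefficient of $X^2$ on each side. On the left, $X^{2^{\ell-1}-1}P(1/X)=\sum_{k=0}^{\ell-1}X^{2^{\ell-1}-2^k}$, so the monomial $X^2$ appears iff $2^{\ell-1}-2^k=2$ for some $k$; the only solution is $(\ell,k)=(3,1)$, so the $X^2$-coefficient on the left is $0$ for $\ell\geq 5$. On the right, $P(X+1)=\sum_{k=0}^{\ell-1}(X+1)^{2^k-1}$, whose $X^2$-coefficient is $\sum_{k=0}^{\ell-1}\binom{2^k-1}{2}\bmod 2$; by Lucas' theorem, $\binom{2^k-1}{2}$ is odd exactly when $k\geq 2$, so this sum equals $(\ell-2)\bmod 2$, which is $1$ for $\ell$ odd. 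For $\ell>3$ odd, the two coefficients disagree, contradicting the identity. Hence the hypothesis fails, and some $x\in \F\setminus\{0,1\}$ must satisfy $\mathrm{Tr}(x+1/x)=0$.

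The main conceptual obstacle is identifying the right polynomial reformulation of the hypothesis; the key observation is that $P(X)$ coincides with the Artin--Schreier trace polynomial divided by $X$, making it completely explicit with only $\ell$ nonzero terms. Once this reformulation is in hand, the coefficient comparison via Lucas' theorem is mechanical, and the case $\ell=3$ (where the hypothesis actually holds, as verified in the analysis of $\mathcal{F}_8$) is singled out cleanly as the only admissible $\ell$.
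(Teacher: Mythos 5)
Your proof is correct, but it takes a genuinely different route from the paper's. The paper argues by pure root counting: multiplying ${\mathrm{Tr}}(x+1/x)=1$ through by $x^{2^{\ell-1}}$ and using $x^{2^\ell}=x$ turns the condition into the vanishing of a fixed nonzero polynomial of degree $2^{\ell-1}+2^{\ell-2}$, which is less than $2^{\ell}-2=|\F\setminus\{0,1\}|$ once $\ell>3$, so some $x$ escapes. You instead upgrade the hypothesis to an exact polynomial identity: inversion must biject the nonzero trace-$0$ elements with the trace-$1$ elements other than $1$, which (since both $X^{2^{\ell-1}-1}P(1/X)$ and $P(X+1)$ are monic, separable, of degree $2^{\ell-1}-1$, and split over $\F$) forces $X^{2^{\ell-1}-1}P(1/X)=P(X+1)$ for the reduced trace polynomial $P(X)={\mathrm{Tr}}(X)/X=\sum_k X^{2^k-1}$; you then refute this by comparing the coefficient of $X^2$, which is $0$ on the left for $\ell\geq 5$ but $(\ell-2)\bmod 2=1$ on the right, by Lucas. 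All the steps check out (in particular $P$ really is $\prod_{y\neq 0,\,{\mathrm{Tr}}(y)=0}(X-y)$ since ${\mathrm{Tr}}(X)$ is separable with root set exactly $\ker{\mathrm{Tr}}$, and the identity does hold for $\ell=3$, consistent with $\F_8$ being genuinely exceptional). The paper's argument is shorter and needs only a degree bound; yours costs a little more bookkeeping but buys an exact dichotomy, showing the forced identity holds precisely when $\ell=3$ rather than merely that the counting bound fails there.
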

\begin{proof}
    Observe that if $x$ satisfies \eqref{eq:trace1withrecip} then we have:
    \begin{equation}
        \begin{split}
            x^{2^{\ell-1}}\cdot {\mathrm{Tr}}\left(x+\frac{1}{x}\right)=x^{2^{\ell-1}}\cdot \sum_{i=0}^{\ell-1}\left(x^{2^i}+x^{-2^i}\right)&=  x^{2^{\ell-1}}\\
            \sum_{i=0}^{\ell-1}\left(x^{2^{\ell-1}+2^i}+x^{2^{\ell-1}-2^i}\right)&=  x^{2^{\ell-1}}\\
            x+\sum_{i=0}^{\ell-2} \left(x^{2^{\ell-1}+2^i}\right)+x^{2^{\ell-1}}+\sum_{i=0}^{\ell-1}\left(x^{2^{\ell-1}-2^i}\right)&=  0
        \end{split}
    \end{equation}
    where the last line follows from the fact that $x^{2^{\ell}}=x$ for every $x\in \F$ (see Lemma \ref{lem:Galoisfacts}). The left-hand side here is a polynomial of degree $2^{l-1}+2^{l-2}$ (with coefficients in $\F_2$). Therefore it has at most $2^{l-1}+2^{l-2}$ roots in the field $\F$. If $\ell>3$ then $2^{\ell-1}+2^{\ell-2}<2^{\ell}-2.$ Therefore there is an $x\in \F\setminus\{0,1\}$ that does not satisfy \eqref{eq:trace1withrecip}.
\end{proof}%

Combining these gives us the following.

\begin{corollary}\label{cor:powerof2notU1F}
    Let $q=2^\ell$ for some odd integer $\ell>3$. Then $\mathcal{F}_q$ is not a U1F.
\end{corollary}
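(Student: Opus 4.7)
The plan is to derive this corollary as an immediate consequence of the three preceding lemmas chained together by contrapositive. Specifically, I would argue by contradiction: suppose $\mathcal{F}_q$ is a U1F. Then Lemma \ref{lem:ifinR_4overlap-oddpowerof2}, read contrapositively, tells us that no $\alpha\in\F^*$ satisfies condition \eqref{eq:alphaaltcond}. Feeding this into Lemma \ref{lem:traceofeltplusrecipnonzero} gives that every $x\in\F\setminus\{0,1\}$ satisfies $\mathrm{Tr}(x+1/x)=1$. But Lemma \ref{lem:nosuchthingasalwaystrace1} asserts that for $\ell>3$ odd this is impossible, yielding the desired contradiction.

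Since each of the three ingredients is already proved in the excerpt, there is no further calculation to do; the only care needed is in writing down the contrapositive chain cleanly, noting that $\ell>3$ is required precisely for the final lemma (the other two lemmas work for any odd $\ell$). There is no genuine obstacle: the heavy lifting has already been carried out in Lemmas \ref{lem:ifinR_4overlap-oddpowerof2}, \ref{lem:traceofeltplusrecipnonzero}, and \ref{lem:nosuchthingasalwaystrace1}, where the work of reducing the problem to a trace identity and then exhibiting a low-degree polynomial violating that identity has been done. Accordingly, the proof I would write is essentially a single paragraph stating ``Combine Lemmas \ref{lem:ifinR_4overlap-oddpowerof2}, \ref{lem:traceofeltplusrecipnonzero}, and \ref{lem:nosuchthingasalwaystrace1}'' with the contrapositive made explicit, and observing that $\F=\F_{2^\ell}$ contains at least one element outside $\{0,1\}$ to which the contradiction applies.
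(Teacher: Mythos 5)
Your proposal is correct and matches the paper exactly: the paper gives no separate proof of this corollary, simply stating that it follows by combining Lemmas \ref{lem:ifinR_4overlap-oddpowerof2}, \ref{lem:traceofeltplusrecipnonzero}, and \ref{lem:nosuchthingasalwaystrace1} in precisely the contrapositive chain you describe. Your remark that $\ell>3$ is needed only for the final lemma is also accurate.
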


The proof of Theorem \ref{secondmain} then follows from Lemmas \ref{lem:oddnot5powernotU1F}, \ref{lem:powersof5notU1F}, along with Corollary \ref{cor:powerof2notU1F}, and the knowledge from \cite{mitchell2022factorisation} that $\mathcal{F}_2,\mathcal{F}_5,\mathcal{F}_8$ are both U1Fs and C1Fs.

\section{Hamilton-Berge 1-Factorisations}

A necessary condition for a 1-factorisation of $K_n^k$ to be a Hamilton-Berge 1-factorisation is that the union of each $k$-set of 1-factors is connected. We remark that in the proof of Lemma \ref{c1f:primepowercomb}, the subfield is actually large enough to allow us to find three 1-factors whose union is disconnected. We also note that in the proof of Lemma \ref{c1f:oddprimec1f} there are not enough copies of $A_4$ to ensure that no three distinct copies of $C_3$ are contained in the same copy of $A_4$. From these two remarks it follows that if $q=p^\ell$ for some prime $p\geq 5$ and some integer $\ell\geq 2$, or if $q>11$ is prime,  or if $q=2^r$ for some odd composite $r$, then $\mathcal{F}_q$ cannot be an HB1F. Finally, it follows from Lemma \ref{c1f:2primepowerc1f} that if $q=2^p$ for some odd prime $p$, then $\mathcal{F}_q$ satisfies the property that the union of each set of three distinct 1-factors is connected. Thus $\mathcal{F}_q$ can only be an HB1F if $q\in \{2,5,11\}$ or $q=2^p$ for some odd prime $p$. The 1-factorisations $\mathcal{F}_5, \mathcal{F}_8, \mathcal{F}_{11}$, and $\mathcal{F}_{32}$ were shown to be HB1Fs in \cite{mitchell2022factorisation}, and the 1-factorisation $\mathcal{F}_2$ is trivially an HB1F. We have also shown computationally that $\mathcal{F}_{128}$ is an HB1F, which leads us to the following conjecture.

\begin{conjecture} $\mathcal{F}_{q}$ is a Hamilton-Berge 1-factorisation if and only if it is a connected 1-factorisation.
\end{conjecture}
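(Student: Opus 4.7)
The forward implication (HB1F $\Rightarrow$ C1F) is essentially already contained in the opening paragraph of this section. A hypergraph with a Hamilton Berge cycle is connected, so an HB1F forces every triple-union to be connected. As noted above, the arguments of Lemmas \ref{c1f:primepowercomb} and \ref{c1f:oddprimec1f} actually furnish three 1-factors with disconnected union whenever $q\notin\{2,5,11\}\cup\{2^p\mid p\text{ odd prime}\}$ (the subfield $\F_{q'}\cup\{\infty\}$ is already large enough to absorb a third factor, and for $q>11$ odd prime the count of $A_4$-subgroups of $\PSL(2,q)$ is too small to keep three cyclic-$C_3$ subgroups out of a common copy of $A_4$). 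Combined with Theorem \ref{firstmain}, this gives HB1F $\Rightarrow$ C1F for every $q$.

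The content of the conjecture is therefore the converse. For $q\in\{2,5,11\}$ and the small cases $q\in\{8,32,128\}$ it is already in hand; the task is to prove that $\mathcal{F}_{2^p}$ is an HB1F for every odd prime $p\geq 11$. My plan begins by applying a triples-version of Remark \ref{rmk:relabeling} to reduce to $F_1=F_{1,0}$, and then exploiting the sharp $3$-transitivity of $\PSL(2,2^p)=\PGL(2,2^p)$ on $\PG(1,q)$ to further normalise a vertex in one of the remaining factors. The key structural observation is a numerical coincidence: each $F_{\alpha,\beta}$ has $(q+1)/3$ hyperedges, so a triple of factors contributes exactly $q+1$ hyperedges, which is precisely the length of a Hamilton Berge cycle on $q+1$ vertices. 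Consequently any Hamilton Berge cycle in the triple-union must use every hyperedge of every factor exactly once, turning the problem into a rigid rainbow-Hamilton-cycle problem.

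Concretely, form the coloured multigraph $G^{\ast}$ on $V$ in which, for each hyperedge $e=\{x,y,z\}\in F_1\cup F_2\cup F_3$, one adds the three pair-edges $\{x,y\},\{y,z\},\{x,z\}$ all coloured by $e$; a Hamilton cycle in $G^{\ast}$ using each colour exactly once is then equivalent to the desired Hamilton Berge cycle. I would attempt this from two angles in parallel. (a) An explicit construction exploiting a cyclic subgroup of order $q+1$ in $\PSL(2,2^p)$, which acts regularly on $\PG(1,q)$: this gives a canonical cyclic ordering of $V$ against which one can systematically try to assign hyperedges of the three factors, reducing the problem to a finite list of local patterns determined by the parameters $(\alpha_i,\beta_i)$. (b) A switching or counting argument that estimates the number of colour-exact Hamilton cycles in $G^{\ast}$ and shows it is positive once $p$ is sufficiently large; the base cases already verified would then plug the remaining gap.

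The hard part is that, in contrast to the C1F analysis, the HB1F property does not reduce to any clean group-theoretic statement about $\langle f, m_{\alpha_1,\beta_1}, m_{\alpha_2,\beta_2}\rangle$: mere $k$-transitivity of this subgroup is not sufficient, since the desired object is a very specific spanning and counting-exact combinatorial structure. Because $\F_{2^p}$ has no proper non-trivial subfields when $p$ is prime, there is also no natural induction on $p$ via subfield descent. I therefore expect that the decisive input will have to be a characteristic-$2$ algebraic identity, specific to the explicit form of $m_{\alpha,\beta}(x)=\beta+\alpha^2/(\alpha+\beta-x)$, that forces enough compatibility between the three factors' hyperedges along a $\PSL(2,2^p)$-canonical cyclic ordering of $V$; producing such an identity, or showing it is obstructed and the conjecture must be attacked probabilistically, is the main obstacle I foresee.
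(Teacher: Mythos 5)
The statement you are asked to prove is labelled a \emph{Conjecture} in the paper: the authors do not prove it, and neither do you. What you actually establish is only the forward implication, HB1F $\Rightarrow$ C1F, and this part is sound and matches the paper's own discussion: a Hamilton Berge cycle forces connectivity of every triple-union, the subfield argument of Lemma \ref{c1f:primepowercomb} and the $A_4$-counting argument of Lemma \ref{c1f:oddprimec1f} both extend to produce three 1-factors with disconnected union in the excluded cases, and Theorem \ref{firstmain} then closes the loop. Your observation that a triple of 1-factors contributes exactly $q+1$ hyperedges, so that a Hamilton Berge cycle in the union must use every hyperedge exactly once, is also correct and is a genuinely useful rigidity remark not made explicit in the paper.

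The gap is the converse, which is the entire content of the conjecture: that $\mathcal{F}_{2^p}$ is an HB1F for every odd prime $p$ (beyond the computationally verified cases $q=8,32,128$). Everything you offer here is a plan, not an argument. In particular: (i) transitivity, or even sharp $3$-transitivity, of $\langle f, m_{\alpha_1,\beta_1}, m_{\alpha_2,\beta_2}\rangle$ on $\PG(1,q)$ does not by itself produce a rainbow-exact Hamilton cycle in your auxiliary multigraph $G^{\ast}$, and you give no mechanism that would; (ii) the proposed counting/switching argument for ``$p$ sufficiently large'' is not carried out and it is not clear what the relevant expectation or concentration statement would even be for this highly structured (non-random) system of $q+1$ colours each appearing on exactly three pair-edges; (iii) the hoped-for characteristic-$2$ identity along a cyclic ordering coming from a regular cyclic subgroup of order $q+1$ is named as ``the main obstacle I foresee'' rather than supplied. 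You are candid about this, which is to your credit, but as a proof of the stated equivalence the proposal establishes only the easy direction; the substantive direction remains exactly as open as it is in the paper.
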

\medskip
\medskip
\medskip
\noindent{\bf Acknowledgements}
The authors acknowledge the support of an Australian Government Research Training Program Scholarship, and the support of ARC grant DE200101802.

\bibliographystyle{abbrv}

\end{document}